\documentclass[11pt,a4paper]{article}
\usepackage{amsfonts,amsgen,amstext,amsbsy,amsopn,amsfonts,amssymb,amscd}
\usepackage[leqno]{amsmath}
\usepackage[amsmath,amsthm,thmmarks]{ntheorem}
\usepackage{epsf,epsfig}
\usepackage{float}
\usepackage{dsfont}
\usepackage{ebezier,eepic}
\usepackage{color}
\usepackage{tikz}
\usepackage{multirow}
\usepackage{mathrsfs}
\usepackage{graphicx}
\usepackage{subfigure}
\newtheorem{thm}{Theorem}[section]

\newtheorem{prop}[thm]{Proposition}

\newtheorem{lem}[thm]{Lemma}
\newtheorem{example}[thm]{Example}
\newtheorem{false statement}{False statement}
\newtheorem{cor}[thm]{Corollary}

\theoremstyle{definition}

\newtheorem{conj}[thm]{Conjecture}

\makeatletter \@addtoreset{equation}{section}

\def\hh{\mathcal{H}}

\def\hl{\mathcal{L}}

\def\hf{\mathcal{F}}
\def\hg{\mathcal{G}}

\def\ha{\mathcal{A}}
\def\hb{\mathcal{B}}
\def\hs{\mathcal{S}}

\begin{document}

\title{\bf\Large A product version of the Hilton-Milner Theorem II}
\date{}
\author{Peter Frankl\footnote{R\'{e}nyi Institute, Budapest, Hungary. Email: \texttt{frankl.peter@renyi.hu}.}\quad\quad
Jian Wang \footnote{Department of Mathematics, Sichuan University, Chengdu, 610065, China.  Email: \texttt{wangjianmath01@scu.edu.cn}.}
}

\maketitle

\begin{abstract}
Two families $\mathcal{F},\mathcal{G}$ of $k$-subsets of $\{1,2,\ldots,n\}$ are called {\it non-trivial cross-intersecting} if $F\cap G\neq \emptyset$ for all $F\in \mathcal{F}, G\in \mathcal{G}$ and  $\cap \{F\colon F\in \mathcal{F}\}=\emptyset=\cap \{G\colon G\in\mathcal{G}\}$. In this note, we establish the product version of the  Hilton-Milner Theorem for $k\geq 8$ in the full range. That is,  if $\mathcal{F},\mathcal{G}\subset \binom{[n]}{k}$ are non-trivial cross-intersecting, $n\geq 2k+1$ and $k\geq 8$, then
\[
|\mathcal{F}||\mathcal{G}|\leq  \left(\binom{n-1}{k-1}- \binom{n-k-1}{k-1} +1\right)^2.
\]
\end{abstract}

\section{Introduction}

Let $[n]$ be the standard $n$-set $\{1,2,\ldots,n\}$ and $2^{[n]}$ its power set. Subsets of $2^{[n]}$ are called {\it families}.

For $0\leq k\leq n$ set $\binom{[n]}{k}=\{F\subset [n]\colon |F|=k\}$. A subset $\hf$ of $\binom{[n]}{k}$ is called a {\it $k$-uniform} family or simply a {\it $k$-graph}.

A family $\hf\subset 2^{[n]}$ is called {\it intersecting} if $F\cap F'\neq \emptyset$ for all $F,F'\in \hf$. One of the oldest and most fundamental results in extremal set theory is the Erd\H{o}s-Ko-Rado Theorem.

\begin{thm}[\cite{EKR}]
Suppose that $\hf\subset \binom{[n]}{k}$ is intersecting  and $n\geq 2k\geq 2$. Then
\begin{align}\label{ineq-ekr}
|\hf| \leq \binom{n-1}{k-1}.
\end{align}
\end{thm}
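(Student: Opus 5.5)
The plan is Katona's cyclic permutation argument. Fix a cyclic arrangement $\sigma$ of $[n]$, that is, a bijection between $[n]$ and the vertices of an $n$-cycle. Call a $k$-set an \emph{arc} of $\sigma$ if its elements occupy $k$ consecutive positions on the cycle. There are exactly $n$ arcs, one starting at each position.

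The key step is to show that an intersecting family $\hf$ contains at most $k$ arcs of $\sigma$ when $n\geq 2k$.
\begin{itemize}
\item Suppose $A\in\hf$ is an arc occupying positions $1,\dots,k$. Any other arc meeting $A$ must start or end strictly inside $A$: it either starts at one of positions $2,\dots,k$, or ends at one of positions $1,\dots,k-1$.
\item Pair the arc starting at position $i+1$ with the arc ending at position $i$, for $i=1,\dots,k-1$. These two arcs are disjoint precisely because $n\geq 2k$; this is the only place the hypothesis enters.
\item By intersection, $\hf$ contains at most one arc from each of these $k-1$ pairs. Adding $A$ itself gives at most $k$ arcs of $\sigma$ in $\hf$.
\end{itemize}

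Next, double count pairs $(F,\sigma)$ with $F\in\hf$ and $F$ an arc of $\sigma$. There are $(n-1)!$ cyclic arrangements. A fixed $k$-set is an arc of exactly $k!\,(n-k)!$ of them: treat the set as a block, arrange its $k$ elements in order, and arrange the block together with the remaining $n-k$ elements cyclically. Hence
\[
|\hf|\,k!\,(n-k)!\leq k\,(n-1)!,
\]
which rearranges to $|\hf|\leq \frac{k}{n}\binom{n}{k}=\binom{n-1}{k-1}$.

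The only delicate point is the pairing step: it relies on $n\geq 2k$ to guarantee that paired arcs are genuinely disjoint, and it must handle overlapping endpoints correctly. The rest is routine counting.
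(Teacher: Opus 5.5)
Your proof is correct. It is Katona's cyclic-permutation argument, and the details hold up:
\begin{itemize}
\item the arcs meeting $A$ are exactly those starting at positions $2,\dots,k$ or ending at positions $1,\dots,k-1$;
\item the arc starting at $i+1$ and the arc ending at $i$ together span $2k$ consecutive positions, so they are disjoint exactly when $n\geq 2k$;
\item a given $k$-set is an arc in $k!\,(n-k)!$ of the $(n-1)!$ arrangements.
\end{itemize}
One small wording point: the count $(n-1)!$ is for arrangements up to rotation, not for bijections onto a labelled cycle. The ratio in your double count is unaffected either way.

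The paper gives no proof of this statement. It is quoted from Erd\H{o}s--Ko--Rado, whose classical argument is by shifting, a technique the paper explicitly traces to that source. Inside the paper it is also recovered as the case $\hf=\hg$ of Pyber's product bound \eqref{ineq-pyber}.

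Compared with that, your route is shorter and self-contained, needs no induction, and covers the whole range $n\geq 2k$ in one averaging step. What it does not directly give is control over pairs of different cross-intersecting families or over non-trivial families. The shifting and Kruskal--Katona/Hilton machinery is heavier, but it is what the paper needs for those settings, which is why it works within that framework (e.g.\ Lemma~\ref{lem-2.3} and Corollary~\ref{cor-key}).
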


The {\it full star} $\hs=\{S\in \binom{[n]}{k}\colon 1\in S\}$ shows that \eqref{ineq-ekr} is best possible. Hilton and Milner proved that for $n>2k$ excluding stars leads to considerably better bounds.

Define the {\it Hilton-Milner family}
\[
\hh(n,k):=\left\{H\in \binom{[n]}{k}\colon 1\in H,\ H\cap \{2,3,\ldots,k+1\}\neq \emptyset\right\} \cup \{\{2,3,\ldots,k+1\}\}.
\]
For $n>2k$,
\[
|\hh(n,k)| =\binom{n-1}{k-1}-\binom{n-k-1}{k-1}+1<\binom{n-1}{k-1}.
\]

Let us say that $\hf$ is {\it non-trivial} if $\cap \{F\colon F\in \hf\}=\emptyset$.

\begin{thm}[\cite{HM67}]
Let $n> 2k\geq 4$. Suppose that $\hf\subset \binom{[n]}{k}$ is intersecting and non-trivial. Then
\begin{align}\label{ineq-nontrival}
|\hf| \leq |\hh(n,k)|.
\end{align}
Moreover, equality holds in \eqref{ineq-nontrival} only if $\hf$ is isomorphic to $\hh(n,k)$.
\end{thm}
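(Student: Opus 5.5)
We prove the Hilton--Milner Theorem by shifting combined with a reduction that keeps track of non-triviality. Let $\hf\subset\binom{[n]}{k}$ be intersecting and non-trivial, and assume $|\hf|$ is maximal. The plan is to apply the shifts $S_{ij}$ ($i<j$), which preserve size and the intersecting property. A shift may destroy non-triviality, but only in one way: after $S_{ij}$ every set contains $i$. In that case $\hf$ itself satisfies $\hf\subset\{F\colon F\cap\{i,j\}\neq\emptyset\}$. We then stop shifting and treat this ``two-element cover'' situation directly. Otherwise we keep shifting until $\hf$ is shifted (stable under all $S_{ij}$) and still non-trivial.

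\emph{Case 1: $\hf$ is shifted and non-trivial.} Shiftedness gives $\{2,3,\ldots,k+1\}\in\hf$: some $F\in\hf$ avoids $1$, and we shift it down. Split $\hf=\hf(1)\cup\hf(\bar 1)$ according to whether $1\in F$. Consider the traces $\ha=\{F\setminus\{1\}\colon 1\in F\in\hf\}\subset\binom{[2,n]}{k-1}$ and $\hb=\hf(\bar 1)\subset\binom{[2,n]}{k}$. These are cross-intersecting on the $(n-1)$-set $[2,n]$, and $\hb\neq\emptyset$. Since $\hf$ is intersecting and shifted, a standard argument shows $\hb$ is itself intersecting on $[2,n]$, and in fact $\hb$ is $2$-intersecting when viewed appropriately. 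The cleaner route is to bound $|\ha|+|\hb|$ for cross-intersecting families with $\hb\neq\emptyset$. For cross-intersecting $\ha\subset\binom{X}{k-1}$ and $\hb\subset\binom{X}{k}$ with $|X|=n-1\geq 2k$ and $\hb$ nonempty, we have
\[
|\ha|+|\hb|\leq \binom{n-1}{k-1}-\binom{n-k-1}{k-1}+1.
\]
To prove this, fix $B\in\hb$. Then every $A\in\ha$ meets $B$, giving $|\ha|\leq\binom{n-1}{k-1}-\binom{n-k-1}{k-1}$. Each additional set of $\hb$ then has to be paid for by removing sets from $\ha$. I would do this by a Kruskal--Katona / Katona circle count: for $\hb'=\hb\setminus\{B\}$, the family of $(k-1)$-sets disjoint from some member of $\hb$ grows at least as fast as $|\hb|$ in the range $n-1\geq 2k$. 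This is where $n>2k$ is used.

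\emph{Case 2: $\hf\subset\{F\colon F\cap\{i,j\}\neq\emptyset\}$ with $\hf$ non-trivial.} Here there are sets $F_i\ni i$ with $j\notin F_i$ and $F_j\ni j$ with $i\notin F_j$. Every set containing $j$ but not $i$ must meet every set containing $i$ but not $j$, and vice versa. This reduces to the same cross-intersecting inequality on $[n]\setminus\{i,j\}$, applied to the $(k-1)$-uniform traces, plus the $\binom{n-2}{k-2}$ sets containing both $i$ and $j$. A direct computation gives a bound of the form $\binom{n-2}{k-2}+2\bigl(\binom{n-3}{k-2}-\binom{n-k-2}{k-2}\bigr)+\ldots$, which should be strictly below $|\hh(n,k)|$ for $n>2k$, $k\geq 3$. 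The case $k=2$ is trivial: a triangle gives $3=|\hh(n,2)|$.

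\emph{Uniqueness and the main obstacle.} For uniqueness, we trace the equality cases back through the shifts: equality in the cross-intersecting lemma forces $\hb=\{B\}$ and $\ha$ to be all $(k-1)$-sets meeting $B$ (for $k\geq 4$; $k=3$ has the extra extremal triangle-type family, to be checked separately). We then check that unshifting a Hilton--Milner family while preserving intersection yields only isomorphic copies. The main obstacle is the cross-intersecting inequality with $\hb\neq\emptyset$, in particular proving that enlarging $\hb$ never pays off. I would first try the Katona cycle method: on each cyclic order of $X$, at most $k$ arcs can be used in total, with the count tight only when a single $k$-arc is present. Failing that, I would use induction on $n$ through the link and deletion of the element $n$.
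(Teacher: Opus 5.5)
The paper does not prove this theorem; it is quoted from Hilton and Milner, so your argument has to stand on its own. Your shifting skeleton is sound: stop when some $S_{ij}$ would create a star, and at that point $\hf$ is covered by $\{i,j\}$. The central step does not hold, however.

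\textbf{Case~1.} The inequality you propose is false as stated. Fix $x\in X$ and let $\ha=\{A\in\binom{X}{k-1}\colon x\in A\}$ and $\hb=\{B\in\binom{X}{k}\colon x\in B\}$. These are cross-intersecting, and $\hb\neq\emptyset$; it is even intersecting, as in your application. Yet $|\ha|+|\hb|=\binom{n-1}{k-1}>h(n,k)$, because $\binom{n-k-1}{k-1}\geq k\geq 2$.

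The same pair refutes your ``paying'' mechanism. Start from one $B\ni x$. The number of $(k-1)$-sets disjoint from some member of $\hb$ then grows only by $\binom{n-2}{k-1}-\binom{n-k-1}{k-1}$, while $\binom{n-2}{k-1}-1$ sets are added to $\hb$. The cycle claim also fails: one $k$-arc together with the $2k-2$ arcs of length $k-1$ meeting it gives $2k-1>k$ cross-intersecting arcs.

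What is missing is any use of non-triviality beyond $\hb\neq\emptyset$. In the shifted case the extra information available is $\partial\hb=\{B\setminus\{b\}\colon B\in\hb,\ b\in B\}\subseteq\ha$. This holds because $(G\setminus\{g\})\cup\{1\}\in\hf$ for $G\in\hf(\bar 1)$ and $g\in G$, and it excludes the star. But for any cross-intersecting pair with $\hb\neq\emptyset$ and $\partial\hb\subseteq\ha$, the family $\{A\cup\{1\}\colon A\in\ha\}\cup\hb$ is automatically intersecting and non-trivial. So the corrected lemma is just the Hilton--Milner bound restated. Your proposal reduces Case~1 to the theorem itself and gives no working argument for it.

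\textbf{Case~2.} Under the hypothesis you state (non-trivial, and every member meets $\{i,j\}$), no bound strictly below $h(n,k)$ is possible, since every member of $\hh(n,k)$ meets $\{1,2\}$. Your reduction to two non-empty cross-intersecting $(k-1)$-uniform traces on $[n]\setminus\{i,j\}$ is right. What it needs, though, is the equal-uniformity Hilton--Milner lemma, not your Case~1 inequality: if $\ha,\hb\subset\binom{[m]}{\ell}$ are non-empty and cross-intersecting with $m\geq 2\ell$, then $|\ha|+|\hb|\leq\binom{m}{\ell}-\binom{m-\ell}{\ell}+1$. That lemma must also be proved. It gives $|\hf|\leq\binom{n-2}{k-2}+\binom{n-2}{k-1}-\binom{n-k-1}{k-1}+1=h(n,k)$. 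You get strictness only if you also use that $S_{ij}$ moves every member containing $j$ but not $i$ (so the two traces are disjoint), together with the equality cases of that lemma.

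\textbf{Uniqueness.} For $k=3$ the uniqueness clause cannot be proved at all. The family $\{F\in\binom{[n]}{3}\colon |F\cap[3]|\geq 2\}$ is non-trivial and intersecting, with $3n-8=h(n,3)$ members. Its maximum degree is $2n-5$, whereas $\hh(n,3)$ has maximum degree $3n-9$, and these differ for $n\geq 7$. Hilton and Milner claim uniqueness only for $k\geq 4$. Your triangle remark is therefore a counterexample to the stated clause, not a case to be checked separately. For $k\geq 4$, you assert without argument the ``unshifting'' step: that an intersecting non-trivial $\hf$ with $S_{ij}(\hf)\cong\hh(n,k)$ is itself isomorphic to $\hh(n,k)$.
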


Being a strong stability result, the Hilton-Milner Theorem is widely used in extremal set theory. During the years various new proofs were given (cf.  \cite{FFuredi},\cite{Mors},\cite{FT},\cite{KZ},\cite{HK},\cite{F2017}).

Let us say that two families $\hf,\hg$ are {\it cross-intersecting} (CI for short) if $F\cap G\neq \emptyset$ for all pairs $F\in \hf$ and $G\in \hg$.

This is a very natural extension of the notion ``intersecting". As a matter of fact it naturally arises in the proof of results concerning intersecting families. It plays a central role already in the paper of Hilton and Milner.

Let us recall the first product theorem for CI families.

\begin{thm}[\cite{Pyber86}]
Suppose that $\hf,\hg\subset \binom{[n]}{k}$ are CI. Then
\begin{align}\label{ineq-pyber}
|\hf||\hg| \leq \binom{n-1}{k-1}^2.
\end{align}
\end{thm}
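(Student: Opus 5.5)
Throughout I take $n\geq 2k$, as in the surrounding results; for $n<2k$ any two $k$-sets meet, so taking $\hf=\hg=\binom{[n]}{k}$ shows the bound needs this hypothesis. The plan is to reduce the product bound to a one-variable inequality via the Kruskal--Katona theorem in Lov\'asz's form.

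\textbf{Reduction to a shadow condition.} By symmetry assume $|\hf|\leq|\hg|$. If $|\hg|\leq\binom{n-1}{k-1}$, then both families have size at most $\binom{n-1}{k-1}$ and we are done. So assume $|\hg|>\binom{n-1}{k-1}$. Pass to complements $\hg^c=\{[n]\setminus G\colon G\in\hg\}\subset\binom{[n]}{n-k}$. Then $F\cap G\neq\emptyset$ exactly when $F\not\subset [n]\setminus G$. Hence $\hf$ is disjoint from the $k$-shadow $\partial_k(\hg^c)=\{K\in\binom{[n]}{k}\colon K\subset H \text{ for some } H\in\hg^c\}$, which gives
\[
|\hf|\leq \binom{n}{k}-|\partial_k(\hg^c)|.
\]

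\textbf{Applying Kruskal--Katona.} Write $|\hg|=|\hg^c|=\binom{x}{n-k}$ with $x$ real. Since $\binom{n-1}{n-k}<|\hg|\leq\binom{n}{n-k}$, we have $x\in(n-1,n]$. Lov\'asz's version of Kruskal--Katona gives $|\partial_k(\hg^c)|\geq\binom{x}{k}$. Therefore
\[
|\hf||\hg|\leq f(x):=\binom{x}{n-k}\left(\binom{n}{k}-\binom{x}{k}\right).
\]
At $x=n-1$ the second factor is $\binom{n}{k}-\binom{n-1}{k}=\binom{n-1}{k-1}$, and $\binom{n-1}{n-k}=\binom{n-1}{k-1}$, so $f(n-1)=\binom{n-1}{k-1}^2$. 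It therefore suffices to show that $f$ is non-increasing on $[n-1,n]$.

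\textbf{Monotonicity of $f$ (the main obstacle).} Put $S_m(x)=\sum_{i=0}^{m-1}\frac{1}{x-i}$, so that $\frac{d}{dx}\log\binom{x}{m}=S_m(x)$. We need
\[
S_{n-k}(x)\left(\binom{n}{k}-\binom{x}{k}\right)\leq \binom{x}{k}S_k(x)\qquad (n-1\leq x\leq n).
\]
I would bound each side as follows.
\begin{itemize}
\item Since $\binom{x}{k}$ is convex in $x$ on this range, $\binom{n}{k}-\binom{x}{k}\leq (n-x)\frac{d}{dx}\binom{x}{k}\big|_{x=n}\leq \binom{n}{k}S_k(n)$, using $n-x\leq 1$.
\item For the ratio, $\binom{x}{k}\geq\binom{n-1}{k}=\frac{n-k}{n}\binom{n}{k}$.
\item $S_k(x)\geq S_k(n)$, since each term $\frac{1}{x-i}$ decreases in $x$.
\end{itemize}
After these substitutions the inequality reduces to something like $S_{n-k}(x)\leq\frac{n-k}{n}$. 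This fails as stated, because $S_{n-k}$ is a harmonic-type sum of order $\log$. So the crude convexity estimate has to be replaced by exact bookkeeping: the terms of $S_{n-k}$ with large index should be matched against the factor $\binom{n}{k}-\binom{x}{k}$, which is small (at most about $(n-x)\binom{n-1}{k-1}$ times a bounded factor). This is where $n\geq 2k$ must be used.

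\textbf{Fallback if the calculus is awkward.} Set $t=n-x\in[0,1)$ and compare the polynomials in $t$ directly: expand $\binom{n-t}{n-k}$ and $\binom{n-t}{k}$ as products and show that $f$ is maximized at the endpoint $t=1$.

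\textbf{Fallback avoiding Kruskal--Katona.} Alternatively, replace the whole argument by shifting. Shifting preserves both the cross-intersecting property and the sizes. One then inducts on $n$ through the usual decomposition $\hf(\bar n),\hf(n)$ and $\hg(\bar n),\hg(n)$, handling the base case $n=2k$ via complementary pairs.
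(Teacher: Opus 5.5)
The paper does not prove this theorem. It quotes it from Pyber, remarking only that for $n=2k$ it follows from $|\hf|+|\hg|\le 2\binom{n-1}{k-1}$ and AM--GM.

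Your reduction is valid as far as it goes: $\hf\cap\partial_k(\hg^c)=\emptyset$, and Lov\'asz's bound $|\partial_k(\hg^c)|\ge\binom{x}{k}$ applies since $n-k\ge k$. But the key step, ``it suffices to show $f$ is non-increasing on $[n-1,n]$'', is not merely hard: it is false for every $n>2k$. Use $\binom{n-1}{k}/\binom{n-1}{k-1}=\frac{n-k}{k}$ and split $\sum_{j=k}^{n-1}$ at $n-k$. The first sum below has $n-2k$ terms, each larger than $\frac1{n-k}$, and the second is at most $\frac{k}{n-k}$, so
\[
(\log f)'(n-1)=\sum_{j=k}^{n-1}\frac1j-\frac{n-k}{k}\sum_{j=n-k}^{n-1}\frac1j=\sum_{j=k}^{n-k-1}\frac1j-\frac{n-2k}{k}\sum_{j=n-k}^{n-1}\frac1j>\frac{n-2k}{n-k}-\frac{n-2k}{n-k}=0 .
\]
For instance, with $n=5$, $k=2$ you have $f(4)=16$, but $f(4.2)=\binom{4.2}{3}\bigl(10-\binom{4.2}{2}\bigr)=4.928\cdot 3.28>16$.

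So the loss sits in Lov\'asz's real-variable form of Kruskal--Katona itself, not in your crude estimates of the harmonic sums. No finer bookkeeping can repair it, and neither can your polynomial fallback, which asserts the same maximum at $t=1$. Integrality does not rescue it either: for $n=2k+1$ and large $k$, the excess $\max f-\binom{n-1}{k-1}^2$ is of order $\binom{n-1}{k-1}^2/k^2$. That is far more than the at most $|\hg|$ you gain by rounding $|\hf|$ down to an integer.

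What is needed is a lossless reduction followed by a genuine count. By the exact (cascade) Kruskal--Katona theorem, in the form of Hilton's lemma stated in the paper, you may replace $\hf,\hg$ by $\hl(n,k,|\hf|)$ and $\hl(n,k,|\hg|)$. Write $|\hg|=\binom{n-1}{k-1}+m$ with $m>0$ and $|\hf|\le|\hg|$; for $n\ge 2k$ this forces $\hl(n,k,|\hf|)$ into the star of $1$. You must then show that the first $m$ sets of $\binom{[2,n]}{k}$ rule out at least $d$ sets of that star, where $d\bigl(\binom{n-1}{k-1}+m\bigr)\ge m\binom{n-1}{k-1}$. That computation is the real content of the theorem, and your proposal does not supply it.

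Your last fallback does not supply it either. Shifting and the split at $n$ give, by induction, $|\hf(\bar n)||\hg(\bar n)|\le\binom{n-2}{k-1}^2$ and $|\hf(n)||\hg(n)|\le\binom{n-2}{k-2}^2$. But products do not add. Concluding $(a_0+a_1)(b_0+b_1)\le(\sqrt{a_0b_0}+\sqrt{a_1b_1})^2$ would require $a_0b_1+a_1b_0\le 2\sqrt{a_0b_0a_1b_1}$, which is the reverse of AM--GM. The cross terms $|\hf(\bar n)||\hg(n)|+|\hf(n)||\hg(\bar n)|$ therefore remain uncontrolled.
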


Let us note that in the case $n=2k$, $\binom{n}{k}=2\binom{n-1}{k-1}$ and $|\hf|+|\hg|\leq 2\binom{n-1}{k-1}$ is easy to prove. This implies \eqref{ineq-pyber} by the inequality between AM and GM. Hence in the sequel we always assume $n\geq 2k+1$.

Define $h(n,k):=|\hh(n,k)|$.
Based on our earlier work let us state a conjecture.

\begin{conj}
Suppose that $\hf,\hg\subset \binom{[n]}{k}$ are CI and both are non-trivial. Then
\begin{align}\label{ineq-conj}
|\hf||\hg| \leq h(n,k)^2.
\end{align}
\end{conj}

Needless to say that \eqref{ineq-conj} implies \eqref{ineq-nontrival} just as \eqref{ineq-pyber} implies \eqref{ineq-ekr}. Let us recall our recent result:

\begin{thm}[\cite{FW23}]
Suppose that $\hf,\hg\subset \binom{[n]}{k}$ are CI and both are non-trivial. Then \eqref{ineq-conj} holds for $n\geq 4k$ and $k\geq 8$.
\end{thm}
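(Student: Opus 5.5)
Since the statement is a previously established theorem from \cite{FW23}, I sketch how I would prove that \eqref{ineq-conj} holds for $n\geq 4k$, $k\geq 8$. The plan is to shift first and then split into cases according to how ``close to a star'' the two families are.

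\textbf{Step 1: shifting and the diversity parameter.} Apply left-shifting $S_{ij}$ simultaneously to $\hf$ and $\hg$. This preserves sizes and the cross-intersecting property. It may destroy non-triviality, so only shift while both families stay non-trivial. If a shift would make one family trivial, stop and record that the family has a special structure, e.g.\ $\hf\setminus\hf(1)$ consists of sets containing $2$. For a family define $\hf(\bar 1)=\{F\in\hf\colon 1\notin F\}$, and similarly $\hg(\bar 1)$; both are non-empty by non-triviality. The cross-intersecting pairs $(\hf(\bar 1),\hg(1))$ and $(\hg(\bar 1),\hf(1))$ constrain everything. Indeed, for $G\in\hg(1)$ the set $G\setminus\{1\}$ must meet every member of $\hf(\bar 1)$. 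Hence a single $F_0\in \hf(\bar 1)$ already gives
\[
|\hg(1)|\leq \binom{n-1}{k-1}-\binom{n-k-1}{k-1},
\]
and symmetrically for $\hf(1)$.

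\textbf{Step 2: the case of small diversity.} Suppose $|\hf(\bar 1)|$ and $|\hg(\bar 1)|$ are both small, say at most $\binom{n-3}{k-2}$. Write
\[
|\hf|=|\hf(1)|+|\hf(\bar 1)|, \qquad |\hg|=|\hg(1)|+|\hg(\bar 1)|,
\]
and bound $|\hf(1)|$ in terms of $\hg(\bar 1)$ via a Hilton--Milner type cross-intersecting inequality. The key tool is: if $\ha\subset\binom{[2,n]}{k-1}$ and $\hb\subset\binom{[2,n]}{k}$ are cross-intersecting with $\hb\neq\emptyset$, then
\[
|\ha|+|\hb|\leq \binom{n-1}{k-1}-\binom{n-k-1}{k-1}+1 .
\]
This follows from the Kruskal--Katona / Hilton lemma on cross-intersecting families for $n-1\geq 2k$. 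It gives
\[
|\hf(1)|+|\hg(\bar 1)|\leq h(n,k), \qquad |\hg(1)|+|\hf(\bar 1)|\leq h(n,k).
\]
I would then reduce to maximizing
\[
(h-x+a)(h-y+b)
\]
where $x=|\hg(\bar 1)|$ and $y=|\hf(\bar 1)|$ are small, and $a,b$ are the slack terms. One needs the sharper statement that increasing $|\hg(\bar 1)|$ beyond one set costs $\hf(1)$ much more than it gains. This would be done via the Frankl or Kruskal--Katona estimate: $t$ sets in $\hg(\bar 1)$ kill at least about $t\binom{n-k-2}{k-2}$ sets of $\hf(1)$ when $t\le n-k$. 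With $n\ge 4k$ this makes the product decreasing in $x$ and $y$, so the maximum occurs at $x=y=1$, which gives exactly $h(n,k)^2$.

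\textbf{Step 3: large diversity.} If, say, $|\hf(\bar 1)|>\binom{n-3}{k-2}$, use the shifted structure. Now every $G\in\hg$ must meet $[2,k+1]$-type sets in many ways, which forces
\[
|\hg|\leq \binom{n}{k}-\text{(large)}.
\]
More concretely, the Frankl degree/diversity theorem says that intersecting-type families with diversity at least $\binom{n-u-1}{n-k-1}$ have size bounded by the $u$-th Frankl family. Together with Pyber's bound \eqref{ineq-pyber} applied to the pairs restricted to $[2,n]$, this gives
\[
|\hf||\hg|\leq c\binom{n-1}{k-1}\cdot\binom{n-2}{k-2}\cdot O(k),
\]
which is below $h(n,k)^2\approx k^2\binom{n-2}{k-2}^2$ once $n\geq 4k$. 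This case needs careful numerics; that is where $k\geq 8$ and $n\geq 4k$ would be used.

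\textbf{Main obstacle.} The delicate part is Step 2's monotonicity argument near the extremal configuration. Here both families have exactly one or a few sets avoiding the common center, and possibly different centers. The boundary cases where shifting had to stop because of non-triviality must also be handled. I would treat these by hand, using the fact that then $\hf(\bar 1)\subset\{F\colon 2\in F\}$ and reapplying the inequality with respect to the pair $\{1,2\}$.
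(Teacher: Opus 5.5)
The statement is only quoted from \cite{FW23}; this paper proves the complementary range $2k+1\leq n\leq 4k$. So your sketch has to stand on its own.

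\textbf{Step 2: a false lemma, but repairable.} As stated, your key tool fails. For $A_0\in\binom{[2,n]}{k-1}$, the pair $\ha=\{A_0\}$, $\hb=\{B\in\binom{[2,n]}{k}\colon B\cap A_0\neq\emptyset\}$ is cross-intersecting with $|\ha|+|\hb|=\binom{n-1}{k}-\binom{n-k}{k}+1$, which is much larger than $h(n,k)$ for $n\geq 4k$.

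It does hold in the range where you use it, $1\leq|\hb|\leq\binom{n-3}{k-2}$. By Hilton's lemma you may take $\hb$ lexicographic, so all its members contain $\{2,3\}$. The members of $\ha$ missing $\{2,3\}$ can then be bounded by a Hilton--Milner-type inequality for cross-intersecting families on $[4,n]$. This restriction and its proof are what your sketch omits; compare the degree-based version in Lemma~\ref{lem-2.3} and Corollary~\ref{cor-key}.

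Once both inequalities hold you are done immediately, since $|\hf||\hg|\leq(h-x+y)(h+x-y)=h^2-(x-y)^2$. The monotonicity discussion is therefore unnecessary. The count ``$t$ sets kill about $t\binom{n-k-2}{k-2}$'' is also inaccurate: for a lexicographic $\hb$ with $t\leq n-k$ members, exactly $\sum_{j=1}^{t}\binom{n-k-j}{k-j}$ sets are excluded, a geometrically decaying sum.

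\textbf{Step 3: the genuine gap.} This is where all the difficulty lies, and as written it is not an argument. Frankl's diversity theorem concerns a single intersecting family. Its cross-intersecting analogue (Theorem~\ref{thm-fk}) needs both families large, whereas the case to control is one small and one large family.

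The ingredient this paper imports from \cite{FW23} for that case is Proposition~\ref{prop-2.1}. For a product-maximal pair of minimal weight, either both families are shifted, and then $|\hf||\hg|\leq h(n,k)^2$ or $\min\{|\hf|,|\hg|\}\leq\binom{n}{k-2}$; or the pair is shift-resistant and $\min\{|\hf|,|\hg|\}\leq\binom{n-2}{k-2}+\binom{n-4}{k-2}$. This must be combined with a sharp bound on the other family; compare how Proposition~\ref{prop-2.4} and Lemmas~\ref{lem-2.7}, \ref{lem-2.8} are used for $3k\leq n\leq 4k$. Your ``stop shifting and treat by hand'' offers no substitute for the shift-resistant case.

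Your closing estimate is also numerically false. The bound $c\cdot O(k)\binom{n-1}{k-1}\binom{n-2}{k-2}$ is of order $\frac{k^2}{n}\binom{n-1}{k-1}^2$.
\begin{itemize}
\item At $n=4k$ it is of order $k\binom{n-1}{k-1}^2$, far above $h(n,k)^2<\binom{n-1}{k-1}^2$. There $h(n,k)$ is close to $\binom{n-1}{k-1}\approx 4\binom{n-2}{k-2}$ by \eqref{ineq-key0}, not to $k\binom{n-2}{k-2}$.
\item For $n\gg k^2$, where indeed $h(n,k)\approx k\binom{n-2}{k-2}\approx\frac{k^2}{n}\binom{n-1}{k-1}$, it still exceeds $h(n,k)^2\approx\frac{k^4}{n^2}\binom{n-1}{k-1}^2$.
\end{itemize}
So the Step 3 estimate fails at both ends of the range $n\geq 4k$.
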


Let us also mention that the $k=3$ case of \eqref{ineq-conj} was settled in \cite{F26}.

The aim of the present paper is to prove \eqref{ineq-conj} for $k\geq 8$ in the range $4k\geq n\geq 2k+1$ thereby completely settling the case $k\geq 8$ of \eqref{ineq-conj}.

\begin{thm}\label{thm-main2}
Let  $4k\geq n\geq 2k+1$, $k\geq 8$. Suppose that  $\hf,\hg\subset \binom{[n]}{k}$ are CI and both are non-trivial. Then
\begin{align}\label{newineq-hfhg2}
|\hf||\hg|\leq  \left(\binom{n-1}{k-1}- \binom{n-k-1}{k-1} +1\right)^2=h(n,k)^2.
\end{align}
\end{thm}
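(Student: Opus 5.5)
We plan to follow the standard shifting-plus-case-analysis route for cross-intersecting problems. First we reduce to shifted families. Apply the shifting operators $S_{ij}$ simultaneously to $\hf$ and $\hg$. They preserve sizes and the cross-intersecting property, but they can destroy non-triviality. The usual remedy is to shift only while both families stay non-trivial. If some $S_{ij}$ would make one family trivial, then all sets of that family meet $\{i,j\}$, and we stop with that extra structure. So we have two cases to treat. In the first case $\hf,\hg$ are shifted (initial) and non-trivial, so $\{2,\ldots,k+1\}$ lies in both families. In the second case, say, every $F\in\hf$ meets a fixed pair $\{i,j\}$.

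Next we set up the key decomposition. Write $\hf(\bar 1)=\{F\in\hf\colon 1\notin F\}$ and $\hf(1)=\{F\setminus\{1\}\colon 1\in F\in\hf\}$, and define $\hg(\bar1)$ and $\hg(1)$ likewise. Non-triviality gives $\hf(\bar1),\hg(\bar1)\neq\emptyset$. Cross-intersection gives three conditions:
\begin{itemize}
\item $\hf(1)$ and $\hg(\bar1)$ are cross-intersecting;
\item $\hf(\bar1)$ and $\hg(1)$ are cross-intersecting;
\item $\hf(\bar1)$ and $\hg(\bar1)$ are cross-intersecting, as $k$-sets on $\{2,\ldots,n\}$.
\end{itemize}
Then
\[
|\hf|=|\hf(1)|+|\hf(\bar1)|\le \binom{n-1}{k-1}-|\partial\text{-type shadow of }\hg(\bar1)|+|\hf(\bar1)|,
\]
and symmetrically for $|\hg|$. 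The main tool here is a sharp bound of Hilton--Milner / Frankl--Tokushige type for cross-intersecting pairs $\ha\subset\binom{[2,n]}{k-1}$, $\hb\subset\binom{[2,n]}{k}$ with $\hb\neq\emptyset$: the quantity $|\ha|+|\hb|$ is maximised when $|\hb|=1$, giving $\binom{n-1}{k-1}-\binom{n-k-1}{k-1}+1$. Rather than a sum, we need a product version. So we parametrize by $b=|\hf(\bar1)|$ and $c=|\hg(\bar1)|$ and use the Kruskal--Katona (Lovász form) bound on $|\hf(1)|\le\binom{n-1}{k-1}-f(c)$, where $f(c)$ is the minimal number of $(k-1)$-sets disjoint from none of $c$ sets. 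This yields
\[
|\hf||\hg|\le\bigl(\tbinom{n-1}{k-1}-f(c)+b\bigr)\bigl(\tbinom{n-1}{k-1}-f(b)+c\bigr).
\]

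The proof then splits according to the sizes of $b$ and $c$. If $b=c=1$, each factor is at most $h(n,k)$ and we are done. If one of them, say $b$, is at least $2$, the loss $f(b)-1$ satisfies $f(b)-1\ge \binom{n-k-1}{k-1}+\binom{n-k-2}{k-2}-2$ already at $b=2$. This should exceed the gain $c-1$ in the other factor, after using $|\hg(\bar1)|\cdot$ small versus $\hg(1)$ large. When both $b$ and $c$ are large, we instead use Pyber-type bounds: $\hf(\bar1),\hg(\bar1)$ are cross-intersecting on $n-1$ points, so $bc\le\binom{n-2}{k-1}^2$, combined with $|\hf(1)||\hg(\bar1)|$-type product estimates via Theorem \ref{ineq-pyber}'s argument. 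Because $n\le 4k$, the ratio $\binom{n-k-1}{k-1}/\binom{n-1}{k-1}$ is bounded below by roughly $(3/4)^{k-1}$ rather than being tiny. This is what makes the crude bounds of \cite{FW23} fail, and why a careful function-of-$(b,c)$ analysis is required.

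We expect the main obstacle to be the intermediate range, where $b$ and $c$ are both moderately large: $b$ comparable to $\binom{n-k-1}{k-1}$, with $n$ close to $2k+1$. In that range neither the shadow loss nor Pyber's bound alone suffices. Our first attempt will be to split $\hf(\bar1)$ further by element $2$ (a second level of the decomposition), and to apply the Frankl--Kupavskii inequality $|\ha|+|\hb|$ bounds for cross-intersecting pairs with $|\hb|\ge\binom{n-k-1}{k-1}$-type thresholds. We would then verify the resulting numerical inequalities using $k\ge8$, treating $n=2k+1,2k+2$ separately if needed. The non-shifted case, where every $F$ meets $\{i,j\}$, will be handled by the same decomposition with respect to $\{i,j\}$; there the trivial bound $|\hf|\le 2\binom{n-2}{k-1}$-type estimates suffice to beat $h(n,k)^2$ for $k\ge 8$.
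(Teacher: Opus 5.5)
Your proposal has a genuine gap at its core: the case you call the ``intermediate range'' is left as an expectation, and the tools you list cannot close it. The bound
\[
|\hf||\hg|\le\Bigl(\tbinom{n-1}{k-1}-f(c)+b\Bigr)\Bigl(\tbinom{n-1}{k-1}-f(b)+c\Bigr)
\]
depends only on $(b,c)$. It ignores the cross-intersection of $\hf(\bar{1})$ with $\hg(\bar{1})$, ignores all non-triviality beyond $b,c\ge1$, and never uses shiftedness. For example, $b=c=\binom{n-2}{k-1}$ arises from $\hf(\bar{1})=\hg(\bar{1})$ equal to the full star of $2$ in $\binom{[2,n]}{k}$. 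Then $f(b)=f(c)=\binom{n-2}{k-1}$, so the right-hand side is $\binom{n-1}{k-1}^2>h(n,k)^2$. Pyber's bound $bc\le\binom{n-2}{k-1}^2$ holds with equality there, so adding it changes nothing. Such $(b,c)$, and nearby ones, are excluded only because the families would then be centred at $2$ rather than $1$, i.e.\ (nearly) trivial. Extracting this structural information is the missing idea.

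The paper gets it from the Frankl--Kupavskii \emph{diversity} theorem (Theorem~\ref{thm-fk} with $u=3$), not from a sum bound. If $|\hf|,|\hg|\ge\binom{n-2}{k-2}+2\binom{n-3}{k-2}$, both families share a unique max-degree element, say $1$, with diversity $<\binom{n-4}{k-3}$. Hence $|\hf(\hat{1})|,|\hg(\hat{1})|>\binom{n-2}{k-2}$. Hilton's lexicographic lemma then makes $\hf_0\cup\hg_1$ and $\hf_1\cup\hg_0$ non-trivial intersecting, and Hilton--Milner gives $|\hf|+|\hg|\le2h(n,k)$ (Corollary~\ref{cor-key}).

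Your second case rests on a false claim. Knowing that every $F\in\hf$ meets a fixed pair $\{i,j\}$ only gives $|\hf|\le\binom{n-1}{k-1}+\binom{n-2}{k-1}>h(n,k)$; indeed every member of $\hh(n,k)$ meets $\{1,2\}$. It also says nothing about $|\hg|$. So no estimate of that type beats $h(n,k)^2$. The paper needs the genuine input of Proposition~\ref{prop-2.1}: a shift-resistant pair has $\min\{|\hf|,|\hg|\}\le\binom{n-2}{k-2}+\binom{n-4}{k-2}$.

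Finally, you have the role of $n\le4k$ backwards: it is what makes crude bounds \emph{succeed}. Since $h(n,k)^2/\binom{n-1}{k-1}^2>1-2(5/7)^{k-1}$, suppose one family has size $x$ below the relevant threshold. That threshold is $\binom{n-2}{k-2}+2\binom{n-3}{k-2}$ for $n\le3k$, and $\binom{n}{k-2}$ or $\binom{n-2}{k-2}+\binom{n-4}{k-2}$ for $3k\le n\le4k$. Then $|\hf|+|\hg|\ge x+h(n,k)^2/x>2\binom{n-1}{k-1}$, contradicting Propositions~\ref{prop-2.3} and~\ref{prop-2.4}. So the actual proof is a two-case dichotomy: one family is small, or both are large and concentrated on a common centre. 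For $3k\le n\le4k$ the large case is imported via Proposition~\ref{prop-2.1}. No $(b,c)$-analysis is needed at all.
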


Obviously equality holds in \eqref{newineq-hfhg2} if  $\hf=\hg=\hh(n,k)$. However this is not the only case.

\begin{example}
Let $F_0 , G_0 \in \binom{[2,n]}{k}$  with  $F_0 \cap G_0\neq \emptyset$.
 Define
 \begin{align*}
 \hf &=\{F_0\} \cup \left\{ F \in \binom{[n]}{k}\colon 1 \in F,\ F \cap G_0 \neq \emptyset \right\},\\[3pt]
 \hg&=\{G_0\} \cup \left\{  G \in \binom{[n]}{k}\colon  1 \in G,\ G \cap F_0 \neq \emptyset \right\}.
 \end{align*}
  Even that both families are isomorphic to the Hilton-Milner family, the isomorphism class of the pair $(\hf,\hg)$ depends on $|F_0 \cap G_0|$.
 \end{example}

The present note is complementing our earlier work that treated the case $n\geq 4k$.

How to use the inequality $2k<n\leq 4k$ to our advantage? The main point is that in this range $h(n,k)$ and  $\binom{n-1}{k-1}$ are close to each other. Let us formulate it into a proposition.

\begin{prop}
Let $2k<n\leq tk$. Then
\begin{align}\label{ineq-key0}
\frac{h(n,k)^2}{\binom{n-1}{k-1}^2}>1-2\left(\frac{2t-3}{2t-1}\right)^{k-1}.
\end{align}
\end{prop}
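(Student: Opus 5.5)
We bound the relative loss $x:=\binom{n-k-1}{k-1}/\binom{n-1}{k-1}$. Since $h(n,k)>\binom{n-1}{k-1}-\binom{n-k-1}{k-1}$, we have
\[
\frac{h(n,k)}{\binom{n-1}{k-1}}>1-x,
\qquad\text{hence}\qquad
\frac{h(n,k)^2}{\binom{n-1}{k-1}^2}>(1-x)^2\geq 1-2x .
\]
(Here $x<1$ for $n>2k$, so $1-x>0$ and squaring the inequality is legitimate.) It therefore suffices to prove
\[
x\leq \left(\frac{2t-3}{2t-1}\right)^{k-1}.
\]

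Write $x$ as a product of $k-1$ ratios:
\[
x=\frac{\binom{n-k-1}{k-1}}{\binom{n-1}{k-1}}=\prod_{i=1}^{k-1}\frac{n-k-i}{n-i}=\prod_{i=1}^{k-1}\left(1-\frac{k}{n-i}\right).
\]
Each factor $1-\frac{k}{n-i}$ decreases in $i$, so every factor is at most the first one, $1-\frac{k}{n-1}$. This gives
\[
x\leq \left(1-\frac{k}{n-1}\right)^{k-1}=\left(\frac{n-k-1}{n-1}\right)^{k-1}.
\]
The map $n\mapsto \frac{n-k-1}{n-1}=1-\frac{k}{n-1}$ is increasing in $n$. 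Using $n\leq tk$ we obtain
\[
\frac{n-k-1}{n-1}\leq \frac{tk-k-1}{tk-1}.
\]

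It remains to compare $\frac{tk-k-1}{tk-1}$ with $\frac{2t-3}{2t-1}$. Note that $\frac{2t-3}{2t-1}=1-\frac{2}{2t-1}$ and $\frac{tk-k-1}{tk-1}=1-\frac{k}{tk-1}$. So we need $\frac{k}{tk-1}\geq\frac{2}{2t-1}$, which is equivalent to
\[
k(2t-1)\geq 2tk-2,\qquad\text{i.e.}\qquad k\leq 2 .
\]
This fails for $k\geq 3$, so using the first factor for all $k-1$ terms is too lossy. This comparison is the main obstacle, and the crude bound has to be replaced.

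The fix is to use AM--GM on the whole product instead of bounding each factor by the first. The product of the $k-1$ factors is at most the $(k-1)$-th power of their arithmetic mean:
\[
x\leq\left(\frac{1}{k-1}\sum_{i=1}^{k-1}\left(1-\frac{k}{n-i}\right)\right)^{k-1}.
\]
Each term satisfies $\frac{k}{n-i}\geq\frac{k}{n-1}$, so this again only yields the base $1-\frac{k}{n-1}$, no better than before. So instead I pair the factors symmetrically: the factor for $i$ with the factor for $k-i$. Their product is
\[
\left(1-\frac{k}{n-i}\right)\left(1-\frac{k}{n-k+i}\right)
=\frac{(n-k-i)(n-2k+i)}{(n-i)(n-k+i)} .
\]
In each pair the two numerators sum to $2n-3k$ and the two denominators sum to $2n-k$. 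This suggests that each paired product is at most $\left(\frac{2n-3k}{2n-k}\right)^2$, i.e. that the geometric mean of each pair is bounded by the ratio of the averages. Then, using $n\leq tk$,
\[
\frac{2n-3k}{2n-k}\leq\frac{2tk-3k}{2tk-k}=\frac{2t-3}{2t-1},
\]
which is exactly the base in \eqref{ineq-key0}.

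To justify the pairing step I would verify directly that
\[
(n-k-i)(n-2k+i)(2n-k)^2\leq (n-i)(n-k+i)(2n-3k)^2 .
\]
Setting $u=i-k/2$, the left side is $\bigl((n-3k/2)^2-u^2\bigr)(2n-k)^2$ and the right side is $\bigl((n-k/2)^2-u^2\bigr)(2n-3k)^2$. The right side minus the left side equals
\[
u^2\bigl((2n-k)^2-(2n-3k)^2\bigr)\geq 0,
\]
so the inequality holds. If $k-1$ is odd, the unpaired middle factor has $i=k/2$ and equals $\frac{n-3k/2}{n-k/2}=\frac{2n-3k}{2n-k}$ exactly. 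Multiplying over all pairs (and the middle factor, if present) gives
\[
x\leq\left(\frac{2n-3k}{2n-k}\right)^{k-1}\leq\left(\frac{2t-3}{2t-1}\right)^{k-1},
\]
which completes the plan.
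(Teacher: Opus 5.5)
Your proof is correct and follows essentially the same route as the paper: you reduce to $\binom{n-k-1}{k-1}/\binom{n-1}{k-1}\le\left(\frac{2t-3}{2t-1}\right)^{k-1}$, pair the factors for $i$ and $k-i$ so that each pair is at most $\left(\frac{n-3k/2}{n-k/2}\right)^2$, and then use $n\le tk$. Your substitution $u=i-k/2$ supplies the verification of the pairing inequality that the paper only asserts, and applying $(1-z)^2\ge 1-2z$ at $z=x$ rather than at the target bound is an immaterial reordering.
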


\begin{proof}
Since for any $0\leq z<1$, $(1-z)^2>1-2z$ it is sufficient to prove
$\frac{h(n,k)}{\binom{n-1}{k-1}}>1-\left(\frac{2t-3}{2t-1}\right)^{k-1}$.
Using $h(n,k)=\binom{n-1}{k-1}-\binom{n-k-1}{k-1}+1$, proving $\frac{\binom{n-k-1}{k-1}}{\binom{n-1}{k-1}}\leq \left(\frac{2t-3}{2t-1}\right)^{k-1}$
is sufficient. Noting $\frac{(n-k-j)(n-k-(k-j))}{(n-j)(n-(k-j))}\leq \left(\frac{n-\frac{3k}{2}}{n-\frac{k}{2}}\right)^2$, by $n\leq tk$ we have
\[
\frac{\binom{n-k-1}{k-1}}{\binom{n-1}{k-1}} =\prod_{1\leq i\leq k-1}\frac{n-k-i}{n-i} \leq \left(\frac{n-\frac{3}{2}k}{n-\frac{k}{2}}\right)^{k-1}
\leq \left(\frac{2t-3}{2t-1}\right)^{k-1}.
\]
\end{proof}

\section{Preliminaries}

There is an important partial order $\prec$, called {\it shifting order} for sets of equal size. For convenience let $(a_1,\ldots,a_k)$ denote the $k$-set $\{a_1,\ldots,a_k\}$ if $a_1<\ldots<a_k$. We define
\[
(a_1,\ldots,a_k)\prec (b_1,\ldots,b_k) \mbox{ iff } a_i\leq b_i \mbox{ holds for all }1\leq i\leq k.
\]

A $k$-graph $\hf\subset \binom{[n]}{k}$ is called {\it shifted} (or {\it initial}) if $(b_1,\ldots,b_k)\in \hf$ and $(a_1,\ldots,a_k)\prec (b_1,\ldots,b_k)$ always imply $(a_1,\ldots,a_k)\in \hf$.

Shifted $k$-graphs have many nice properties and their use can be traced back to the paper of Erd\H{o}s, Ko and Rado \cite{EKR}.

We also need the notion of {\it shift-resistant pair}. For a pair of families $\hf,\hg\subset \binom{[n]}{k}$, define the quantity
\[
w(\hf,\hg) =\sum_{F\in \hf}\sum_{i\in F} i + \sum_{G\in \hg}\sum_{j\in G} j.
\]
Let us fix  $\hf,\hg\subset \binom{[n]}{k}$ so that $\hf,\hg$ are CI and both are non-trivial, $|\hf||\hg|$ is maximal, moreover among such pairs $w(\hf,\hg)$ is minimal. If in such pair  $\hf$ and $\hg$ are not both shifted then we say that $(\hf,\hg)$ forms a {\it shift-resistant pair}. Clearly, in proving Theorem \ref{thm-main2} we may assume that either $\hf,\hg$ are both shifted or $(\hf,\hg)$ forms a  shift-resistant pair.

For the proof of Theorem \ref{thm-main2} we need the following.

\begin{prop}[\cite{FW23}]\label{prop-2.1}
Suppose that $\hf,\hg\subset \binom{[n]}{k}$ are CI and both are non-trivial, $|\hf||\hg|$ is maximal, moreover among such pairs $w(\hf,\hg)$ is minimal.  If $\hf,\hg$ are both shifted, then either $\min\{|\hf|,|\hg|\}\leq \binom{n}{k-2}$ or $|\hf||\hg|\leq h(n,k)^2$ holds. If $(\hf,\hg)$ forms a  shift-resistant pair, then $\min\{|\hf|,|\hg|\}\leq \binom{n-2}{k-2}+\binom{n-4}{k-2}$.
\end{prop}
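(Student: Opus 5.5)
The plan is to treat the two cases of Proposition~\ref{prop-2.1} separately. In both, the only extremal input I intend to use is the basic fact about shifting: for $1\leq i<j\leq n$ the $(i,j)$-shift $S_{ij}$ preserves sizes and preserves the cross-intersecting property, and it strictly decreases $w(\hf,\hg)$ whenever it changes one of the families.

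\emph{Shifted case.} Let $\hf,\hg$ be shifted, CI and non-trivial.
\begin{itemize}
\item Non-triviality plus shiftedness forces $[2,k+1]\in\hf\cap\hg$. Indeed, some member avoids $1$, and it can be shifted down to $(2,\ldots,k+1)$.
\item I split according to whether some member of $\hf\cup\hg$ avoids $\{1,2\}$.
\item \emph{Some member avoids $\{1,2\}$.} Say $F\in\hf$ with $F\cap[2]=\emptyset$. By shiftedness $(3,4,\ldots,k+2)\in\hf$, together with everything below it in $\prec$. The aim is to show that each $G\in\hg$ must then meet a fixed small set in at least two elements, which gives $|\hg|\leq\binom{n}{k-2}$. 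The test sets are $\{1,2\}$ together with a $(k-2)$-subset of $[3,k+2]$ avoiding $G$. Making this step precise via $\prec$ is the main obstacle, and I expect it to need an iterated argument on the largest element of $G\cap[k+2]$.
\item \emph{Every member avoiding $1$ contains $2$, in both families.} Write $\hf=\hf(1)\cup\hf(\bar1)$, where $\hf(\bar1)\subset\{F\colon 2\in F,\ 1\notin F\}$, and similarly for $\hg$. Here $\hf(\bar1)$ and $\hg(1)$, as well as $\hf(1)$ and $\hg(\bar1)$, are cross-intersecting pairs of the Hilton--Milner type: a $k$-uniform family on $[2,n]$ against a $(k-1)$-uniform family on $[2,n]$.
\item In that sub-case I use the inequality $|\ha|+|\hb|\leq\binom{n-1}{k-1}-\binom{n-k-1}{k-1}+1$ for non-empty CI families $\ha\subset\binom{[2,n]}{k}$, $\hb\subset\binom{[2,n]}{k-1}$. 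This gives $|\hf(\bar1)|+|\hg(1)|\leq h(n,k)$ and $|\hg(\bar1)|+|\hf(1)|\leq h(n,k)$. Then $|\hf||\hg|\leq h(n,k)^2$ follows by AM--GM applied to $(|\hf(1)|+|\hf(\bar1)|)(|\hg(1)|+|\hg(\bar1)|)$, after rearranging the two sums.
\end{itemize}

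\emph{Shift-resistant case.} By maximality and minimality of $w$, every shift $S_{ij}$ that changes the pair must destroy non-triviality. So for each pair $i<j$ for which $S_{ij}$ is not the identity on $\hf$ or on $\hg$, one of the shifted families becomes a star.
\begin{itemize}
\item Shifting can only create the common element $i$. Hence the family in question, say $\hf$, satisfies $F\cap\{i,j\}\neq\emptyset$ for all $F\in\hf$. Moreover, some member of $\hf$ avoids $i$ and some member avoids $j$.
\item First perform all shifts that do not destroy non-triviality, which keeps the pair extremal. The pair is not shifted, so some $S_{ij}$ is still non-trivial. Assume without loss of generality that every member of $\hf$ meets $\{i,j\}$.
\item Next use the cross-intersection with the non-trivial family $\hg$, together with the shiftedness already achieved on the remaining coordinates. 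I want to locate a second pair $\{i',j'\}$, disjoint from $\{i,j\}$, such that every member of $\hf$ that meets $\{i,j\}$ in exactly one element also meets $\{i',j'\}$ in the appropriate pattern.
\item Counting then gives sets containing both $i,j$, at most $\binom{n-2}{k-2}$, plus sets containing exactly one of $i,j$ and constrained by $\{i',j'\}$, at most $\binom{n-4}{k-2}$. This yields $\min\{|\hf|,|\hg|\}\leq\binom{n-2}{k-2}+\binom{n-4}{k-2}$.
\end{itemize}

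The delicate point in this case is producing the second pair. I would first try to obtain it as the pair whose shift is blocked for $\hg$, after showing that it cannot be the same pair $\{i,j\}$ for both families.
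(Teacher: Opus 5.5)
The paper does not prove Proposition~\ref{prop-2.1}; it quotes it from \cite{FW23}. So your proposal has to stand on its own. As written, it has a genuine gap in the shifted case and leaves the shift-resistant case essentially unproved.

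\textbf{The sub-case where $\hf$ has a member avoiding $\{1,2\}$.} The step you aim for is false.
\begin{itemize}
\item From $(3,\ldots,k+2)\in\hf$ one can only deduce $|G\cap[k+2]|\ge 3$ for $G\in\hg$. This is sharp: $\hf=\binom{[k+2]}{k}$, $\hg=\{G\colon |G\cap[k+2]|\ge 3\}$ is a shifted, non-trivial CI pair.
\item Worse, in this sub-case neither family need be small. For $k=8$, $n=24$, take $\hf=\{F\colon |F\cap[3]|\ge 2\}\cup\binom{[10]}{8}$ and $\hg=\{G\colon |G\cap[3]|\ge 2,\ |G\cap[10]|\ge 3\}$. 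This pair is shifted, non-trivial and CI, and $(3,\ldots,10)\in\hf$. Yet $|\hf|=183144$ and $|\hg|=174132$ both exceed $\binom{24}{6}=134596$.
\item Every member of this $\hg$ meets $[3]$ in two elements. So even the property you aim for would not give $\binom{n}{k-2}$.
\end{itemize}
Hence in this sub-case the conclusion can only come from the product alternative: you would have to prove $|\hf||\hg|\le h(n,k)^2$ there, and the plan offers nothing for that.

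For orientation: $\binom{n}{k-2}$ is the reflection-principle count of $k$-sets $G$ with $|G\cap[\ell]|\ge \ell/2+1$ for some $\ell$. That is exactly what the criterion $\exists\ell\colon |F\cap[\ell]|+|G\cap[\ell]|>\ell$ for shifted CI pairs ($n\ge 2k$) forces on $\hg$ once $(2,4,\ldots,2k)\in\hf$. A dichotomy on that set fits the bound much better than one on $\{1,2\}$.

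\textbf{The complementary shifted sub-case.} The inequality you quote for non-empty CI $\ha\subset\binom{[2,n]}{k}$, $\hb\subset\binom{[2,n]}{k-1}$ is false. Take $\hb=\{B\}$ and $\ha$ all $k$-subsets of $[2,n]$ meeting $B$. This gives $\binom{n-1}{k}-\binom{n-k}{k}+1$, which exceeds $h(n,k)$; at $n=2k+1$ the excess is $\frac{1}{k+1}\binom{2k}{k}-1$.

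It can be repaired with structure you already have.
\begin{itemize}
\item Since $\hf(\bar 1)$ lies in the star of $2$, bound the members of $\hg$ containing $\{1,2\}$ by $\binom{n-2}{k-2}$.
\item Apply Hilton--Milner to the $(k-1)$-uniform CI pair $\{F\setminus\{2\}\colon F\in\hf(\bar 1)\}$ and $\{G\setminus\{1\}\colon G\in\hg,\ 1\in G,\ 2\notin G\}$ on $[3,n]$. The latter is non-empty, since otherwise $\hg$ lies in the star of $2$.
\item The two bounds add up to exactly $h(n,k)$.
\end{itemize}
This is the mechanism of Lemma~\ref{lem-2.3} and Corollary~\ref{cor-key}.

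\textbf{The shift-resistant case.} The decisive step is missing. Triviality of $S_{ij}(\hf)$ gives precisely two facts. Every member of $\hf$ meets $\{i,j\}$. And $\{F\setminus\{i\}\colon F\in\hf,\ i\in F,\ j\notin F\}$ and $\{F\setminus\{j\}\colon F\in\hf,\ j\in F,\ i\notin F\}$ are disjoint. Together these only yield $|\hf|\le\binom{n-2}{k-2}+\binom{n-2}{k-1}=\binom{n-1}{k-1}$.

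Your sketch does not:
\begin{itemize}
\item produce the second pair $\{i',j'\}$;
\item say what the ``appropriate pattern'' is;
\item explain why the sets meeting $\{i,j\}$ in exactly one element should number at most $\binom{n-4}{k-2}$, which would essentially force one fixed trace on a $4$-set;
\item explain why the family trivialized by $S_{ij}$ should be the small one.
\end{itemize}
A proof has to play $\hf$ and $\hg$ against each other. For instance, by maximality each family is the full set of $k$-sets meeting every member of the other, and different pairs $\{i,j\}$ may trivialize different families. That is exactly what your proposal leaves open, so it does not establish the second assertion.
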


There is also an important total order $<_L$, the {\it lexicographic order} defined on $F,G\in \binom{[n]}{k}$ by,
\[
F<_L G \mbox{ iff } \min\{i\colon i\in F\setminus G\}<\min\{i\colon i\in G\setminus F\}.
\]
For $1\leq m\leq \binom{n}{k}$ let $\hl(n,k,m)$ denote the $k$-graph whose edges are the first $m$ sets from $\binom{[n]}{k}$ in the lexicographic order.

Based on the Kruskal-Katona Theorem, Hilton proved the following very useful statement.

\begin{lem}[\cite{Hilton}]
Suppose that $\hf,\hg\subset \binom{[n]}{k}$ are CI. Then $\hl(n,k,|\hf|)$ and $\hl(n,k,|\hg|)$ are CI as well.
\end{lem}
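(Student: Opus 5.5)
The plan is to translate cross-intersection into a shadow condition on complements and then apply the Kruskal--Katona Theorem. For $\hf\subset\binom{[n]}{k}$ write $\hf^c=\{[n]\setminus F\colon F\in\hf\}\subset\binom{[n]}{n-k}$. A $k$-set $G$ meets every $F\in\hf$ if and only if $G$ is contained in no member of $\hf^c$. Equivalently, $G\notin\partial_k(\hf^c)$, where $\partial_k(\mathcal{A})$ denotes the family of $k$-subsets of members of $\mathcal{A}$. Hence $\hf,\hg$ are CI if and only if $\hg\cap\partial_k(\hf^c)=\emptyset$. In particular
\[
|\hg|\leq \binom{n}{k}-|\partial_k(\hf^c)|.
\]
The lemma will follow once I show two things. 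First, $|\partial_k(\hf^c)|\geq|\partial_k(\hl(n,k,|\hf|)^c)|$. Second, the family of $k$-sets meeting every member of $\hl(n,k,m)$ is itself an initial segment in the lexicographic order. Then it is $\hl(n,k,m')$ with $m'=\binom nk-|\partial_k(\hl(n,k,m)^c)|\geq|\hg|$ for $m=|\hf|$, so it contains $\hl(n,k,|\hg|)$.

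The next step is to identify complements of lex-initial families. From the definition, for $F,G\in\binom{[n]}{k}$ we have $F\setminus G=G^c\setminus F^c$ and $G\setminus F=F^c\setminus G^c$. Therefore $F<_L G$ holds if and only if $G^c<_L F^c$. So complementation reverses the lexicographic order, and $\hl(n,k,m)^c$ consists of the last $m$ sets of $\binom{[n]}{n-k}$ in lex order. Applying the relabelling $\pi(i)=n+1-i$, the last $m$ sets in lex order become the first $m$ sets in the colex order, where $A<_C B$ iff $\max(A\setminus B)<\max(B\setminus A)$. I will check this swap of min/max and direction carefully.

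Kruskal--Katona states that among $m$-element families of $(n-k)$-sets, the colex-initial family minimizes the size of the $k$-shadow. It also states that this minimal shadow is itself colex-initial in $\binom{[n]}{k}$. Since $\pi$ preserves shadow sizes, $|\partial_k(\hf^c)|\geq|\partial_k(\hl(n,k,m)^c)|$, which is the first claim.

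For the second claim, $\pi(\partial_k(\hl(n,k,m)^c))$ is colex-initial. Undoing $\pi$, the shadow $\partial_k(\hl(n,k,m)^c)$ is a lex-final segment of $\binom{[n]}{k}$, since $\pi$ carries colex-initial segments to lex-final segments by the same computation. Its complement in $\binom{[n]}{k}$, which is exactly the family of $k$-sets meeting all of $\hl(n,k,m)$, is therefore lex-initial.

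I expect the only delicate point to be the bookkeeping between lex and colex orders under complementation and reversal; the combinatorial content is entirely Kruskal--Katona. I would verify the order correspondences directly from the definitions rather than trusting intuition.
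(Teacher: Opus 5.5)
Your argument is correct. You translate cross-intersection into $\hg\cap\partial_k(\hf^c)=\emptyset$, check that complementation followed by $i\mapsto n+1-i$ sends lex-initial segments of $\binom{[n]}{k}$ to colex-initial segments of $\binom{[n]}{n-k}$, and then apply Kruskal--Katona, using both that colex-initial families minimize the shadow and that their shadows are colex-initial. This is the standard Kruskal--Katona derivation the paper points to; the paper itself gives no proof and only cites Hilton, so your route matches the one it indicates.
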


Let us recall some standard notation. For $i\in [n]$ and $\hf\subset 2^{[n]}$, define
\[
\hf(\hat{i}) =\{F\in\hf\colon i\in F\}\mbox{ and } \hf(\bar{i}) =\{F\in\hf\colon i\notin F\}.
\]

Let us fix $n>2k\geq 4$ and for a family $\hh\subset \binom{[n]}{k}$ define $\hh_1=\hl(n,k,|\hh(\hat{1})|)$ and $\hh_0=\hl([2,n],k,|\hh(\bar{1})|)$ where $[2,n]$ indicates that we only consider $k$-subsets of $[2,n]$.

\begin{lem}\label{lem-2.3}
Suppose that $\hf,\hg\subset \binom{[n]}{k}$ are non-trivial and CI and assume $|\hf(\hat{1})|\geq \binom{n-2}{k-2}$, $|\hg(\hat{1})|\geq \binom{n-2}{k-2}$. Then
\begin{itemize}
  \item[(i)] $\hf_0\cup \hg_1$ and $\hf_1\cup \hg_0$ are intersecting.
  \item[(ii)] If $\min\{|\hf(\hat{1})|, |\hg(\hat{1})|\}> \binom{n-2}{k-2}$ then $\hf_0\cup \hg_1$ and $\hf_1\cup \hg_0$ are non-trivial as well.
\end{itemize}
\end{lem}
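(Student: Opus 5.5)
Since $\hf_1$ consists of sets containing $1$, and the same holds for $\hg_1$, while $\hf_0,\hg_0\subset\binom{[2,n]}{k}$, we first pin down what $\hf_1,\hg_1$ look like. As $|\hf(\hat 1)|\le\binom{n-1}{k-1}$, the family $\hf_1=\hl(n,k,|\hf(\hat 1)|)$ is made of the first $|\hf(\hat 1)|$ sets in lex order. All of these contain $1$, because the sets containing $1$ come first in $<_L$ and there are exactly $\binom{n-1}{k-1}$ of them. The hypothesis $|\hf(\hat 1)|\ge\binom{n-2}{k-2}$ gives the containment $\hf_1\supset\{F\colon\{1,2\}\subset F\}$.

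For (i), the key point is that $\hf_1$ and $\hg_1$ intersect automatically in $1$. Likewise $\hf_0$ is intersecting, since $\hf_0\cup\hg_1$ would be fine for that part anyway. Since the statement asks for $\hf_0\cup\hg_1$ to be intersecting, we need three things. First, $\hf_0$ must itself be intersecting, and second, $\hg_1$ must be intersecting; the latter is trivial through $1$. Third, $\hf_0$ and $\hg_1$ must be cross-intersecting.

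For the cross-intersecting part, apply Hilton's Lemma on $[2,n]$ to the pair $\hf(\bar 1)$ and $\{G\setminus\{1\}\colon G\in\hg(\hat 1)\}$. These are cross-intersecting families of $k$-sets and $(k-1)$-sets respectively. Hilton's lemma is stated for equal uniformity, so I would use the standard Kruskal--Katona version for mixed uniformities: if $\ha\subset\binom{X}{a}$ and $\hb\subset\binom{X}{b}$ are CI, then their lex-initial families of the same sizes are CI. This yields that $\hf_0$ and $\{G\setminus\{1\}\colon G\in\hg_1\}$ are CI, because the lex order restricted to sets containing $1$ is exactly the lex order of the traces on $[2,n]$.

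For $\hf_0$ being intersecting, use the containment above. Every $F\in\hf(\bar 1)$ meets every $k$-set containing $1$ that belongs to $\hg$. Consequently $\hf_0$ is CI with the family of sets $\{1,2\}\cup T$. This forces $2\in F$ for every $F\in\hf_0$: otherwise take $T\subset[3,n]\setminus F$, which is possible as $n-2-k\ge k-2$. So $\hf_0$ is contained in the star of $2$ and is intersecting. The case $\hf_1\cup\hg_0$ is symmetric.

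For (ii), non-triviality of $\hf_0\cup\hg_1$ splits into two requirements. First, $\hf_0\neq\emptyset$, which holds because $\hf$ is non-trivial, so $\hf(\bar 1)\ne\emptyset$. Second, the common intersection must avoid every element; it already avoids $1$ via $\hf_0$. Every set of $\hf_0$ contains $2$, so the danger is element $2$. Here we use $|\hg(\hat 1)|>\binom{n-2}{k-2}$: then $\hg_1$ contains, beyond all sets through $\{1,2\}$, the next lex set $\{1,3,4,\dots,k+1\}$, which avoids $2$. Hence $2\notin\bigcap$, and every $x\ge3$ is missed by some set through $\{1,2\}$. The same argument applies with the roles of $\hf$ and $\hg$ swapped.

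The main obstacle is the mixed-uniformity Hilton step in (i). If only the stated equal-uniformity lemma is allowed, I would reduce to it by padding. Replace $\hf(\bar1)$ with $\hf'=\{F\cup\{1\}\setminus\{\max F\}\}$, or, more cleanly, apply Hilton on $[n]$ to $\hf(\bar 1)$ versus $\hg(\hat 1)$ directly. The lex-first $|\hf(\bar1)|$ sets of $\binom{[n]}{k}$ differ from $\hf_0$, so some care is needed, and I would fall back on the Kruskal--Katona shadow argument underlying Hilton's lemma.
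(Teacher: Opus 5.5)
Your proof is correct and follows the paper's argument. Hilton's Lemma makes $(\hf_0,\hg_1)$ and $(\hf_1,\hg_0)$ CI; the first $\binom{n-2}{k-2}$ lex sets being the supersets of $\{1,2\}$ forces $\hf_0,\hg_0$ into the star of $2$; and the set $\{1,3,\ldots,k+1\}$ together with $\hf_0\neq\emptyset\neq\hg_0$ gives non-triviality in (ii). Your explicit mixed-uniformity use of Hilton's Lemma on $[2,n]$ (uniformities $k$ and $k-1$) is exactly what the paper's one-line appeal implicitly needs, so the padding fallback is unnecessary. Delete the stray sentence claiming $\hf_0$ is intersecting ``since $\hf_0\cup\hg_1$ would be fine,'' which is circular; you prove that fact properly afterwards.
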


\begin{proof}
By CI property and Hilton's Lemma $(\hf_0,\hg_1)$ and $(\hf_1,\hg_0)$ are both CI. Also $\hf_1$ and $\hg_1$ are both stars. To prove (i) we need to show that $\hf_0$ and $\hg_0$ are also intersecting.

The first $\binom{n-2}{k-2}$ sets in the lexcographic order on $\binom{[n]}{k}$ are the supersets of $\{1,2\}$. Hence any $k$-subset of $[2,n]$ intersecting each of them must contain 2. This shows that both $\hf_0$ and $\hg_0$ are contained in the full star of 2 whence intersecting.

In case (ii) both $\hf_1$ and $\hg_1$ must contain some $k$-set not containing 2. As $\hf_0\neq \emptyset\neq\hg_0$, the non-triviality follows.
\end{proof}

\begin{cor}\label{cor-key}
Suppose that $\hf,\hg\subset \binom{[n]}{k}$ are non-trivial and CI. If $\min\{|\hf(\hat{1})|,|\hg(\hat{1})|\}>\binom{n-2}{k-2}$ then
\begin{align}\label{ineq-2.1}
|\hf||\hg|\leq h(n,k)^2.
\end{align}
\end{cor}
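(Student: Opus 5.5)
The plan is to apply the Hilton--Milner Theorem to the two auxiliary families from Lemma~\ref{lem-2.3} and then finish with the AM--GM inequality. The hypothesis $\min\{|\hf(\hat{1})|,|\hg(\hat{1})|\}>\binom{n-2}{k-2}$ gives both assumptions of Lemma~\ref{lem-2.3} and the stronger one in part (ii). Hence $\hf_0\cup\hg_1$ and $\hf_1\cup\hg_0$ are intersecting and non-trivial families in $\binom{[n]}{k}$. Since $n>2k$, the Hilton--Milner Theorem gives
\[
|\hf_0\cup\hg_1|\leq h(n,k)\quad\mbox{and}\quad |\hf_1\cup\hg_0|\leq h(n,k).
\]

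Next I check that each union is disjoint, so that its size is the sum of the two parts. We have $|\hg(\hat 1)|\leq\binom{n-1}{k-1}$, and the first $\binom{n-1}{k-1}$ sets in lexicographic order are exactly the $k$-sets containing $1$. So $\hg_1=\hl(n,k,|\hg(\hat1)|)$ consists only of sets containing $1$. On the other hand, $\hf_0$ consists of subsets of $[2,n]$, which do not contain $1$. Therefore $\hf_0\cap\hg_1=\emptyset$, and by the same argument $\hf_1\cap\hg_0=\emptyset$.

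By construction $|\hf_1|=|\hf(\hat1)|$ and $|\hf_0|=|\hf(\bar1)|$, so $|\hf|=|\hf_1|+|\hf_0|$, and likewise $|\hg|=|\hg_1|+|\hg_0|$. Adding the two Hilton--Milner bounds gives
\[
|\hf|+|\hg|=(|\hf_0|+|\hg_1|)+(|\hf_1|+|\hg_0|)\leq 2h(n,k).
\]
The AM--GM inequality then yields $|\hf||\hg|\leq\left(\frac{|\hf|+|\hg|}{2}\right)^2\leq h(n,k)^2$, which is \eqref{ineq-2.1}.

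There is no real obstacle here; the only point needing care is the disjointness and size bookkeeping just described. The non-emptiness of $\hf_0$ and $\hg_0$, which part (ii) of Lemma~\ref{lem-2.3} relies on, holds automatically: $\hf$ and $\hg$ are non-trivial, so $\hf(\bar1)\neq\emptyset$ and $\hg(\bar1)\neq\emptyset$.
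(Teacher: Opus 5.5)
Your proposal is correct and follows the paper's proof. You apply Lemma~\ref{lem-2.3} to get the non-trivial intersecting families $\hf_0\cup\hg_1$ and $\hf_1\cup\hg_0$, bound each by the Hilton--Milner Theorem, add the two bounds to get $|\hf|+|\hg|\leq 2h(n,k)$, and finish by AM--GM. The paper leaves implicit the disjointness of the unions, the size bookkeeping and the final AM--GM step, all of which you spell out correctly.
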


\begin{proof}
In view of Lemma \ref{lem-2.3}, both $\hf_0\cup \hg_1$ and $\hf_1\cup \hg_0$ are non-trivial and intersecting. By \eqref{ineq-nontrival},
\[
|\hf_0|+|\hg_1|\leq h(n,k), \ |\hf_1|+|\hg_0|\leq h(n,k).
\]
Adding these two inequalities: $|\hf|+|\hg|\leq 2h(n,k)$, implying \eqref{ineq-2.1}.
\end{proof}

For $\hf\subset \binom{[n]}{k}$,  define the {\it diversity} $\gamma(\hf)$ as $\min_{1\leq i\leq n}|\hf(\bar{i})|$.  We need  the diversity result for cross-intersecting families due to the first author and Kupavskii.

\begin{thm}[\cite{FK21}]\label{thm-fk}
Let $n\geq 2k$. Suppose that $\ha,\hb\subset \binom{n}{k}$ are cross-intersecting. If
\[
|\ha|,|\hb|> \binom{n-1}{k-1}-\binom{n-u-1}{k-1}+\binom{n-u-1}{k-u} \mbox{ for some  } u \mbox{ with } 3\leq u\leq k,
\]
then
\begin{align}\label{ineq-diversity2}
\gamma(\ha),\gamma(\hb) <\binom{n-u-1}{k-u},
\end{align}
moreover, both families have the same (unique) element of the largest degree.
\end{thm}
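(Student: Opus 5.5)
The plan is to argue by contradiction. Assume $|\ha|,|\hb|>M_u:=\binom{n-1}{k-1}-\binom{n-u-1}{k-1}+\binom{n-u-1}{k-u}$ and that, say, $\gamma(\ha)\geq\binom{n-u-1}{k-u}$. Let $1$ be an element of maximum degree in $\ha$, so that $x:=|\ha(\bar 1)|=\gamma(\ha)\geq\binom{n-u-1}{k-u}$. Write $\ha(\hat 1)$, $\hb(\hat 1)$ for the $(k-1)$-uniform links on $[2,n]$, with the element $1$ removed, and $\ha(\bar 1)$, $\hb(\bar 1)$ for the $k$-uniform parts on $[2,n]$. Cross-intersection of $\ha,\hb$ then amounts to the following three conditions:
\begin{itemize}
\item $\ha(\bar 1)$ and $\hb(\hat 1)$ are cross-intersecting;
\item $\hb(\bar 1)$ and $\ha(\hat 1)$ are cross-intersecting;
\item $\ha(\bar 1)$ and $\hb(\bar 1)$ are cross-intersecting.
\end{itemize}

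\textbf{Step 1 (a trade-off inequality).} I would prove that for a $k$-graph $\mathcal{P}\subset\binom{[2,n]}{k}$ and a $(k-1)$-graph $\mathcal{Q}\subset\binom{[2,n]}{k-1}$ that are cross-intersecting, with $|\mathcal{P}|\geq\binom{n-u-1}{k-u}$ and $|\mathcal{P}|$ at most some natural upper threshold (say $\binom{n-2}{k-2}$), one has
\[
|\mathcal{P}|+|\mathcal{Q}|\leq M_u .
\]
The tool is the two-uniformity version of Hilton's Lemma, which follows from Kruskal--Katona: one may replace $\mathcal{P}$ and $\mathcal{Q}$ by initial lexicographic segments of the same sizes. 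At the endpoint $|\mathcal{P}|=\binom{n-u-1}{k-u}$, the lex segment $\mathcal{P}$ consists of all $k$-sets containing $[2,u+1]$. Then $\mathcal{Q}$ must meet $[2,u+1]$, so $|\mathcal{Q}|\leq\binom{n-1}{k-1}-\binom{n-u-1}{k-1}$, and equality holds in the claimed inequality. For larger $|\mathcal{P}|$, adding one lex set to $\mathcal{P}$ should destroy at least one admissible set of $\mathcal{Q}$. I would verify this by comparing consecutive lex blocks, in the standard Frankl style: the function $|\mathcal{P}|+\max|\mathcal{Q}|$ is checked to be maximized at the endpoints of the range.

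\textbf{Step 2 (summation).} Apply Step 1 to the pair $(\ha(\bar 1),\hb(\hat 1))$. This is legitimate because $1$ has maximum degree in $\ha$, which gives the required upper bound on $x$; if $x$ exceeds the threshold, the degree of $1$ is small, and a direct counting argument should handle that case. The conclusion is $x+|\hb(\hat 1)|\leq M_u$. Since $|\hb|>M_u$, it follows that $|\hb(\bar 1)|>x\geq\binom{n-u-1}{k-u}$.

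Next apply Step 1 symmetrically to $(\hb(\bar 1),\ha(\hat 1))$, giving $|\hb(\bar 1)|+|\ha(\hat 1)|\leq M_u$. Adding the two inequalities yields $|\ha|+|\hb|\leq 2M_u$, which contradicts $|\ha|,|\hb|>M_u$. I expect the symmetric application to be the main obstacle, because $1$ need not be of maximum degree in $\hb$, so the upper-range hypothesis for $\hb(\bar 1)$ is not automatic. I would handle this using the third cross-intersecting condition. If $\hb(\bar 1)$ were very large, then $\ha(\bar 1)$, which is nonempty of size at least $\binom{n-u-1}{k-u}$, would have to cross-intersect a huge family in $[2,n]$, and this already forces $|\ha|\leq M_u$.

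\textbf{Step 3 (common element of largest degree).} Once $\gamma(\ha),\gamma(\hb)<\binom{n-u-1}{k-u}$, let $a$ and $b$ be maximum-degree elements of $\ha$ and $\hb$, and suppose $a\neq b$. Then almost all of $\ha$ contains $a$ and almost all of $\hb$ contains $b$. Sets of $\ha$ containing $a$ but not $b$ must meet all sets of $\hb$ containing $b$ but not $a$. Counting these via the Kruskal--Katona bound again should give $|\ha|\leq M_u$ or $|\hb|\leq M_u$, a contradiction; this also yields the uniqueness of the element of largest degree.
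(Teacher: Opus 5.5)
The paper gives no proof of this theorem; it is quoted from Frankl and Kupavskii. Your outline therefore has to stand on its own, and it has a genuine gap at the start of Step~2.

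\textbf{The main gap: the regime $x>T$.} Applying Step~1 to $(\ha(\bar 1),\hb(\hat 1))$ requires $x=|\ha(\bar 1)|\le T$. Maximality of the degree of $1$ gives nothing of this kind; it only yields $x\le\frac{n-k}{n}|\ha|$.

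The case $x>T$ is not a leftover for ``direct counting''. It is where the real difficulty lies, and a decomposition at a single element cannot reach it, because the trade-off is simply false for large $|\mathcal P|$. Taking all $k$-subsets and all $(k-1)$-subsets of $[2,n]$ through $2$ gives $|\mathcal P|+|\mathcal Q|=\binom{n-1}{k-1}>M_u$.

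Excluding this regime means using $|\ha(\bar i)|\ge\binom{n-u-1}{k-u}$ for all $i$ at once. Let $\phi(p)$ denote the maximum of $|\mathcal Q|$ given $|\mathcal P|=p$; it is non-increasing by Hilton's lemma. Then $x>T$ does give $|\hb(\hat i)|\le\phi(T)$ for every $i$. But averaging over $i$ only yields $|\hb|\le\frac{n}{k}\phi(T)$. For $T=\binom{n-3}{k-2}$, where $\phi(T)=\binom{n-1}{k-1}-\binom{n-3}{k-1}$, this already exceeds $\binom{n-1}{k-1}$ and contradicts nothing. A global ingredient is missing, for example a Frankl-type theorem for (cross-)intersecting families with bounded maximum degree, or a shifting argument that keeps the diversity under control. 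Your proposal supplies nothing of this sort.

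\textbf{Step~1 is not established by your sketch.} It is false that each new lex set of $\mathcal P$ destroys an admissible $Q$. Adding $\{2,\dots,k,n\}$ to $\{\{2,\dots,k,j\}\colon k<j<n\}$ destroys nothing: a $(k-1)$-set disjoint from it lies in $[k+1,n-1]$, which has at least $k$ elements, so it misses some $j$ and was already disjoint from $\{2,\dots,k,j\}$. Hence $|\mathcal P|+\phi(|\mathcal P|)$ is not monotone. The claim that its maximum over the range is attained at an endpoint is a genuine size-sensitive inequality of Frankl--Kupavskii type, which must be proved or cited.

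\textbf{Your threshold is too large.} For $u=3$, $T=\binom{n-2}{k-2}$ does not work. Take the lex family $\mathcal P$ of $k$-subsets of $[2,n]$ containing $2$ and meeting $\{3,4,5\}$. Then $|\mathcal P|+\phi(|\mathcal P|)=M_3+\binom{n-5}{k-2}-\binom{n-5}{k-3}>M_3$. At the same time $|\mathcal P|=\binom{n-3}{k-2}+\binom{n-4}{k-2}+\binom{n-5}{k-2}<\binom{n-2}{k-2}$, for instance when $k=20$ and $n=41$. This only enlarges the uncovered regime.

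\textbf{What does work.} The obstacle you singled out is harmless, and the third cross-intersection condition is not needed for it. If $|\hb(\bar 1)|>T\ge x$, then by monotonicity of $\phi$,
$|\ha|\le x+\phi(|\hb(\bar 1)|)\le T+\phi(T)\le M_u$.
Step~3 also goes through. Apply Pyber's product bound to two $(k-1)$-uniform families on $[n]\setminus\{a,b\}$: the link of $a$ in $\ha$ restricted to sets avoiding $b$, and the link of $b$ in $\hb$ restricted to sets avoiding $a$. These are cross-intersecting and both have size greater than $\binom{n-3}{k-2}$, which contradicts the bound.
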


Let us recall two results from our earlier paper.

\begin{prop}[\cite{FW23}]\label{prop-2.3}
In proving Theorem \ref{thm-main2} we may assume that
\begin{align}
&\min \left\{|\hf|,|\hg|\right\}> \binom{n-3}{k-3}+\binom{n-4}{k-3}.
\end{align}
\end{prop}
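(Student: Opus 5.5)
We assume $|\hf|\leq \binom{n-3}{k-3}+\binom{n-4}{k-3}$ (the case of $\hg$ is symmetric) and show directly that $|\hf||\hg|\leq h(n,k)^2$. Then only pairs with $\min\{|\hf|,|\hg|\}> \binom{n-3}{k-3}+\binom{n-4}{k-3}$ remain to be treated in Theorem \ref{thm-main2}. The tool is Hilton's Lemma. With $f=|\hf|$ and $g=|\hg|$, the families $\hl(n,k,f)$ and $\hl(n,k,g)$ are CI. Hence $g$ is at most the number $g^*(f)$ of $k$-sets meeting every member of $\hl(n,k,f)$, and it suffices to prove
\[
f\cdot g^*(f)\leq h(n,k)^2 \quad\text{for all } 1\leq f\leq \binom{n-3}{k-3}+\binom{n-4}{k-3}.
\]
Non-triviality of $\hf$ is needed only to ensure $f\geq 2$ (indeed $f\geq 3$). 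Non-triviality of $\hg$ is not used at all.

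First I describe the structure of $\hl(n,k,f)$ in this range. The first $\binom{n-3}{k-3}$ sets in lexicographic order are the supersets of $\{1,2,3\}$. The next $\binom{n-4}{k-3}$ sets are the supersets of $\{1,2,4\}$ avoiding $3$. For $f$ in the top block $\binom{n-3}{k-3}<f\leq \binom{n-3}{k-3}+\binom{n-4}{k-3}$, a set meeting all members must meet $\{1,2\}$, or contain $3$ and meet every member of the initial segment of $\{1,2,4\}$-sets. So $g^*(f)\leq \binom{n}{k}-\binom{n-2}{k}+\binom{n-3}{k-1}$, and at the endpoint $g^*(f)=\binom{n}{k}-\binom{n-2}{k}+\binom{n-4}{k-2}$.

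For smaller $f$ the value $g^*(f)$ grows, but $f$ shrinks much faster. I would split $[1,\binom{n-3}{k-3}+\binom{n-4}{k-3}]$ into the natural lexicographic intervals (supersets of $\{1,2,\dots,j\}$, etc.). On each interval I would bound $f\cdot g^*(f)$ by its value at the right endpoint times a correction factor, checking that the product is essentially increasing in $f$. This reduces the problem to the endpoint $f_0=\binom{n-3}{k-3}+\binom{n-4}{k-3}$. There one needs
\[
\left(\binom{n-3}{k-3}+\binom{n-4}{k-3}\right)\left(\binom{n}{k}-\binom{n-2}{k}+\binom{n-4}{k-2}\right)\leq h(n,k)^2 .
\]
For this comparison I would express everything relative to $\binom{n-1}{k-1}$:
\[
\binom{n-3}{k-3}=\binom{n-1}{k-1}\frac{(k-1)(k-2)}{(n-1)(n-2)}, \qquad
\binom{n}{k}-\binom{n-2}{k}=\binom{n-1}{k-1}+\binom{n-2}{k-1}\leq 2\binom{n-1}{k-1}.
\]
This gives a left-hand side of roughly $4\left(\frac{k}{n}\right)^2\binom{n-1}{k-1}^2$, plus lower-order terms. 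For the right-hand side I would use \eqref{ineq-key0} with $t=4$.

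The main obstacle is the tight end $n=2k+1$. There $4(k/n)^2\approx 1-\frac{2}{k}$, while the loss in \eqref{ineq-key0} is $2(5/7)^{k-1}$, which is only about $0.19$ at $k=8$ against $2/k=0.25$. The crude estimates above therefore leave very little room. I would first try the exact products $\frac{(k-1)(k-2)}{(n-1)(n-2)}$ together with the sharper bound $\binom{n-2}{k-1}=\frac{n-k}{n-1}\binom{n-1}{k-1}$, instead of the factor $2$. For $n$ close to $2k$, I would also replace \eqref{ineq-key0} by the direct ratio $\binom{n-k-1}{k-1}/\binom{n-1}{k-1}$, which is tiny there. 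If a gap remains for small $k$ and $n$ near $2k+1$, I would verify those finitely many cases by direct computation.
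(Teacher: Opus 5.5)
The paper gives no proof of this proposition; it is quoted from [FW23]. Your framework is sound. Hilton's Lemma does give $|\hg|\le g^*(|\hf|)$, your description of $\hl(n,k,f_0)$ is right, and $g^*(f_0)=\binom nk-\binom{n-2}{k}+\binom{n-4}{k-2}$ is correct. The weak point is the step meant to carry the argument: that $f\,g^*(f)$ is ``essentially increasing'' on lexicographic intervals, so everything reduces to $f=f_0$. This is only announced, never carried out. In general $f\,g^*(f)$ is not monotone (it can drop when $g^*$ drops, e.g.\ just after $f=\binom{n-3}{k-3}$ for $n=4k$), and you never say what the correction factors are. As written this is a gap. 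A minor slip: non-triviality gives only $|\hf|\ge 2$ (two disjoint $k$-sets form a non-trivial family), not $|\hf|\ge 3$. In fact you need no non-triviality at all.

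The gap is harmless, because in the range of Theorem~\ref{thm-main2} no analysis of $g^*$ is needed. The ``tight end'' you worry about comes only from replacing $\binom{n-4}{k-3}$ by $\binom{n-3}{k-3}$ and $g^*$ by $2\binom{n-1}{k-1}$. Simply use $|\hg|\le\binom nk$. Since $\binom{n-4}{k-3}=\frac{n-k}{n-3}\binom{n-3}{k-3}$,
\[
\frac{f_0\binom{n}{k}}{\binom{n-1}{k-1}^2}=\frac{n(k-1)(k-2)(2n-k-3)}{k(n-1)(n-2)(n-3)}.
\]
This ratio decreases in $n$ for $n\ge 2k+1$: using $k\le\frac{n-1}{2}$, its logarithmic derivative is less than $\frac{4}{3n-5}-\frac{2}{n}<0$. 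At $n=2k+1$ it equals $\frac{6k^3-11k^2-3k+2}{8k^3-4k^2}<\frac34$. On the other hand, \eqref{ineq-key0} with $t=4$ gives $h(n,k)^2/\binom{n-1}{k-1}^2>1-2(5/7)^{k-1}>0.8$ for $k\ge 8$. Hence $|\hf||\hg|\le f_0\binom nk<h(n,k)^2$ whenever $|\hf|\le f_0$.

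Your exact endpoint value is smaller still: it tends to $\frac{39}{64}\binom{n-1}{k-1}^2$ at $n=2k+1$, so nothing is tight there. A Hilton-type analysis of $\max_f f\,g^*(f)$ like yours becomes necessary only when $n/k$ is large. For $n$ of order $k^3$ the trivial bound $f_0\binom nk\approx\frac{2k}{n}\binom{n-1}{k-1}^2$ does exceed $h(n,k)^2\approx\frac{k^2(k-1)^2}{n^2}\binom{n-1}{k-1}^2$. That regime is outside what this paper needs.
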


\begin{prop}[\cite{FW23}]\label{prop-2.4}
Suppose that $\hf,\hg\subset \binom{[n]}{k}$ are cross-intersecting,  $n\geq 2k$ and
$\min\{|\hf|,|\hg|\}\geq \binom{n-3}{k-3}+\binom{n-4}{k-3}$. Then
\begin{align}
|\hf|+|\hg|\leq 2\binom{n-1}{k-1}.
\end{align}
\end{prop}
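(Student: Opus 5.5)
We reduce to lexicographic families and then to one cross-intersection estimate between the parts of $\hf$ and $\hg$ inside and outside the star of $1$.

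\textbf{Reduction.} By Hilton's Lemma we may replace $\hf,\hg$ by $\hl(n,k,|\hf|)$ and $\hl(n,k,|\hg|)$; this keeps the sizes and the cross-intersecting property. Assume $|\hf|\leq|\hg|$. If $|\hg|\leq\binom{n-1}{k-1}$ we are done trivially, so assume $|\hg|>\binom{n-1}{k-1}$. Since $\hg$ is initial in $<_L$, it then contains the full star of $1$ together with a non-empty initial segment $\hg(\bar 1)$ of $\binom{[2,n]}{k}$.

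Every member of $\hf$ must meet $\{2,\dots,k+1\}\in\hg$. Hence $\hf$ is not the full star, and being lex-initial, $\hf\subset\hf(\hat 1)$. Writing $\hf'=\{F\setminus\{1\}\colon F\in\hf\}\subset\binom{[2,n]}{k-1}$, the statement becomes
\[
|\hg(\bar 1)|\leq \binom{n-1}{k-1}-|\hf'| .
\]
Here $\hf'$ and $\hg(\bar 1)$ are cross-intersecting families on the $(n-1)$-element ground set $[2,n]$, with $n-1\geq 2k$. In words, $\hg(\bar1)$ must be no larger than the family $\mathcal{D}$ of those $(k-1)$-subsets of $[2,n]$ that are disjoint from some member of $\hg(\bar 1)$.

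\textbf{Using the size hypothesis.} A pure double-counting argument in the disjointness graph between $\binom{[2,n]}{k}$ and $\binom{[2,n]}{k-1}$ does not suffice: the degree ratio $\binom{m-k+1}{k}/\binom{m-k}{k-1}$ goes the wrong way. This is exactly where $|\hf|\geq\binom{n-3}{k-3}+\binom{n-4}{k-3}$ enters.

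The first $\binom{n-3}{k-3}+\binom{n-4}{k-3}$ sets in lex order are the supersets of $\{1,2,3\}$ and the supersets of $\{1,2,4\}$ that avoid $3$. So $\hf$ contains all of these. Consequently every $G\in\hg(\bar 1)$ contains $2$, or contains both $3$ and $4$. This caps $\hg(\bar 1)$ inside a small family, namely a lex-initial part of $\{G\subset[2,n]\colon 2\in G\}\cup\{G\subset[3,n]\colon \{3,4\}\subset G\}$.

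I would then split into two cases.
\begin{itemize}
\item[(a)] $\hg(\bar 1)$ lies inside the star of $2$. Each $G\in \hg(\bar 1)$ has $\binom{n-k-1}{k-1}$ disjoint $(k-1)$-sets avoiding $2$. Each $(k-1)$-set $D$ avoiding $2$ is disjoint from at most $\binom{n-k-1}{k-1}$ sets of the form $\{2\}\cup X$ with $X\subset[3,n]$, since $X$ must be a $(k-1)$-subset of the $n-k-1$ elements of $[3,n]\setminus D$. The two counts balance, so counting edges gives $|\mathcal{D}|\geq|\hg(\bar 1)|$.
\item[(b)] $\hg(\bar 1)$ also contains sets avoiding $2$. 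Then $\hf'$ consists of sets meeting $\{2,\dots,k+1\}$ and additionally hitting sets through $\{3,4\}$. Here I would compare the explicit lex-initial families directly, via Kruskal--Katona or by an explicit injection from $\hg(\bar1)$ into $\mathcal{D}$.
\end{itemize}

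\textbf{Main obstacle.} Case (b) is the step I expect to be hardest. The degree counting in $\mathcal{D}$ must be carried out on the restricted bipartite graph, and the boundary configurations have to be checked carefully. The tight situation is when $|\hf|$ is exactly $\binom{n-3}{k-3}+\binom{n-4}{k-3}$, and this is where the inequality $n-1\geq 2k$ is used.
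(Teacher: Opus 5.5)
\textbf{Verdict.} Your reduction to lex-initial families is correct, and so is your case (a). In case (a), $\hf$ lies in the star of $1$, and the disjointness graph between $\{G\in\binom{[2,n]}{k}\colon 2\in G\}$ and $\binom{[3,n]}{k-1}$ is $\binom{n-k-1}{k-1}$-regular on both sides, so $|\mathcal{D}|\geq|\hg(\bar 1)|$. Note that (a) never uses the size hypothesis.

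The genuine gap is case (b). It is the only case where $|\hf|\geq\binom{n-3}{k-3}+\binom{n-4}{k-3}$ is actually needed, and you give no argument for it, only a list of possible tools. Your description of $\hf'$ there (sets meeting $\{2,\dots,k+1\}$ and hitting the sets through $\{3,4\}$) also misses the structural fact that makes this case easy.

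\textbf{The missing idea.} In case (b), lex-initiality forces $\hg(\bar 1)$ to contain every $k$-subset of $[2,n]$ through $2$.
\begin{itemize}
\item[(1)] Any $(k-1)$-set $D\subset[3,n]$ is disjoint from some $\{2\}\cup X$ with $X\in\binom{[3,n]\setminus D}{k-1}$; this is possible since $n-k-1\geq k-1$. Hence every $F\in\hf$ contains $\{1,2\}$.
\item[(2)] Put $\hg_2=\{G\in\hg\colon G\cap\{1,2\}=\emptyset\}$. Then $|\hg|=\binom{n-1}{k-1}+\binom{n-2}{k-1}+|\hg_2|$, so the claim is equivalent to $|\hf|+|\hg_2|\leq\binom{n-2}{k-2}$.
\item[(3)] Every $G\in\hg_2$ contains $\{3,4\}$, so your double count from (a) works one level down. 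Each $G\in\hg_2$ is disjoint from exactly $\binom{n-k-2}{k-2}$ members of $\binom{[3,n]}{k-2}$. Each $D\in\binom{[5,n]}{k-2}$ is disjoint from at most $\binom{n-k-2}{k-2}$ sets of the form $\{3,4\}\cup Y$.
\item[(4)] Hence at least $|\hg_2|$ sets of $\binom{[3,n]}{k-2}$ cannot be of the form $F\setminus\{1,2\}$ with $F\in\hf$. That is, $|\hf|\leq\binom{n-2}{k-2}-|\hg_2|$, which finishes the proof.
\end{itemize}
No Kruskal--Katona or boundary analysis is required.

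\textbf{A minor point.} The hypothesis is $n\geq 2k$, not $n-1\geq 2k$. The argument only ever uses $n-k-1\geq k-1$ and $n-k-2\geq k-2$, that is, $n\geq 2k$.
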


We also need the following inequality.

\begin{lem}
For $k\geq 8$ and $2k+1\leq n\leq 3k$,
\begin{align}\label{ineq-key}
\binom{n-2}{k-2}+2\binom{n-3}{k-2} +\frac{h(n,k)^2}{\binom{n-2}{k-2}+2\binom{n-3}{k-2}}> 2\binom{n-1}{k-1}.
\end{align}
\end{lem}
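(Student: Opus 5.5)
We first remove the unknowns by rewriting the inequality as a comparison of ratios. Put $N=\binom{n-1}{k-1}$, $a=\binom{n-2}{k-2}+2\binom{n-3}{k-2}$ and $D=\binom{n-k-1}{k-1}-1$, so that $h(n,k)=N-D$. Multiplying \eqref{ineq-key} by $a>0$ gives $a^2-2Na+h(n,k)^2>0$. This is the same as
\[
(N-a)^2> N^2-h(n,k)^2=D(2N-D).
\]
Since $D(2N-D)<2ND$, it suffices to prove $2D/N<(1-x)^2$ with $x=a/N$. We expect the slack lost in this step to be harmless except near $n=3k$.

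Next we compute both sides as explicit functions of $n$ and $k$. From $\binom{n-2}{k-2}/N=\frac{k-1}{n-1}$ and $\binom{n-3}{k-2}/N=\frac{(k-1)(n-k)}{(n-1)(n-2)}$ we get
\[
x=\frac{k-1}{n-1}\left(1+\frac{2(n-k)}{n-2}\right),\qquad 1-x=\frac{(n-k)(n-2k-1)-(k-1)(k-2)\cdot 0+\ldots}{(n-1)(n-2)}.
\]
The numerator of $1-x$ is a quadratic in $n$ that we will simplify exactly. For the left side we use the bound from the proof of \eqref{ineq-key0}:
\[
\frac{D}{N}<\frac{\binom{n-k-1}{k-1}}{\binom{n-1}{k-1}}=\prod_{i=1}^{k-1}\frac{n-k-i}{n-i}\le\left(\frac{n-\frac32 k}{n-\frac k2}\right)^{k-1}.
\]
The left side is therefore exponentially small in $k$, while $1-x$ is only polynomially small.

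The argument then splits into two regimes. Near $n=2k+1$ we have $1-x=\Theta(1/k)$ roughly times $(n-2k)$, while $D/N\le\prod(n-k-i)/(n-i)$ is tiny; for instance it equals $8/11440$ at $n=17,k=8$. Here a crude bound on the product, each factor at most $\frac{n-k-1}{n-1}\le \frac{1}{2}+O(1/k)$, beats a bound of order $(n-2k)^2/k^2$. Near $n=3k$, the bound $1-x\ge 1-\frac{k-1}{3k-1}\cdot\frac{7k-6}{3k-2}\approx \frac29$ gives $(1-x)^2\approx 0.064$, while $2D/N\le 2(3/5)^{k-1}$, which is about $0.056$ for $k=8$. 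The cleanest route is to treat $(1-x)^2$ and $2\prod_{i}\frac{n-k-i}{n-i}$ as functions of $n$ on $[2k+1,3k]$. We show $(1-x)^2$ decreases slowly, while the ratio $\prod/(1-x)^2$ is increasing in $n$, because the product's log-derivative dominates. This reduces everything to the endpoint $n=3k$, and then to monotonicity in $k$ starting from $k=8$.

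The main obstacle is the endpoint $n=3k$ with small $k$, where the margin is thin. For $k=8$, $n=24$ the exact product is about $0.026$, so $2D/N\approx 0.052$ against $(1-x)^2\approx 0.064$; the averaged bound $(3/5)^{7}$ is too lossy there. If the monotonicity argument is awkward, we will instead verify the cases $k=8,9,10$ and $n$ near $3k$ by exact evaluation of the products. For $k\ge 11$ we will use $2(3/5)^{k-1}$ against a uniform lower bound on $(1-x)^2$ of roughly $(2/9-O(1/k))^2$. If needed, we can avoid the factor-$2$ loss by using the sharper $D(2N-D)$ directly.
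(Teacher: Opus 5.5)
Your opening reduction is sound and is essentially the paper's first step. Both arguments reduce \eqref{ineq-key} to $(1-x)^2>2D/N$; the paper then weakens this further to $(1-x)^2>2(3/5)^{k-1}$. The gap is in your plan for proving the reduced inequality, which rests on a wrong picture of how the two sides depend on $n$. The numerator you left unfinished simplifies to $(n-1)(n-2)-(k-1)(3n-2k-2)=(n-k)(n-2k)$, so
\[
1-x=\frac{(n-k)(n-2k)}{(n-1)(n-2)},
\]
which is \emph{increasing} in $n$. It runs from $\frac{k+1}{2k(2k-1)}\approx\frac{1}{4k}$ at $n=2k+1$ up to about $\frac29$ at $n=3k$.

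So $(1-x)^2$ does not decrease in $n$, and there is no uniform lower bound of order $(2/9)^2$ on $[2k+1,3k]$. Your step ``for $k\ge 11$ use $2(3/5)^{k-1}$ against a uniform lower bound'' already fails at $k=11$, $n=23$: there $2(3/5)^{10}\approx 0.012$ while $(1-x)^2=(12/462)^2\approx 0.00067$. Likewise, the ratio $\prod_i\frac{n-k-i}{n-i}\big/(1-x)^2$ is not increasing in $n$. For $k=8$ it is about $0.497$ at $n=17$, $0.342$ at $n=18$, and $0.410$ at $n=24$. So the reduction to the endpoint $n=3k$ is invalid.

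The actual bottleneck is $n=2k+1$, $k=8$, not $n=3k$. At $(k,n)=(8,24)$ even the paper's crude bound works, since $2(3/5)^7\approx 0.056<0.064$. At $(8,17)$, by contrast, $(1-x)^2=(9/240)^2\approx 0.001406$ while $2\binom{8}{7}/\binom{16}{7}=16/11440\approx 0.001399$, a margin of about half a percent. Neither of your estimates reaches this:
\begin{itemize}
\item the per-factor bound $(1/2)^{k-1}$ at $n=2k+1$ fails for every $8\le k\le 13$;
\item the bound $\bigl(\frac{n-3k/2}{n-k/2}\bigr)^{k-1}$ gives $2(5/13)^7\approx 0.0025$ at $(8,17)$.
\end{itemize}
Your fallback exact computation is aimed at $n$ near $3k$, so it does not cover these cases either.

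To repair the argument you need two things. First, for large $k$ you need a bound valid over the whole range of $n$. The paper uses $(n-k)(n-2k)\ge n-k\ge k+1$ to get $1-x\ge\frac{1}{9k}$, which together with $D/N<(3/5)^{k-1}$ settles $k\ge 24$. Second, you need a finite check of all $2k+1\le n\le 3k$ for the remaining small $k$; the paper checks $8\le k\le 23$ directly. At $(8,17)$ you should keep the exact $D=\binom{n-k-1}{k-1}-1$, which gives $14/11440\approx 0.00122$, rather than a product estimate.
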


\begin{proof}
Note that
\[
\binom{n-2}{k-2}+2\binom{n-3}{k-2}=\left(\frac{k-1}{n-1}
+\frac{2(k-1)(n-k)}{(n-1)(n-2)}\right)\binom{n-1}{k-1}= \frac{(k-1)(3n-2k-2)}{(n-1)(n-2)}\binom{n-1}{k-1}.
\]
Set $x=\frac{(k-1)(3n-2k-2)}{(n-1)(n-2)}$.
Dividing both sides by $\binom{n-1}{k-1}$, we see that \eqref{ineq-key} is equivalent to
\[
x+  \frac{h(n,k)^2/\binom{n-1}{k-1}^{2}}{x}> 2.
\]
Since $n\leq 3k$, by \eqref{ineq-key0} we have
\[
\frac{h(n,k)^2}{\binom{n-1}{k-1}^2}>1-2\left(\frac{3}{5}\right)^{k-1}.
\]
Then it suffices to show that
\begin{align}\label{ineq-2.7}
1-2\left(\frac{3}{5}\right)^{k-1}> x(2-x).
\end{align}
Let $n=ck+1$ with $2\leq c \leq 3$. Then
\begin{align*}
1-x=1-\frac{(k-1)(3n-2k-2)}{(n-1)(n-2)}&=\frac{(n-1)(n-2)-(k-1)(3n-2k-2)}{(n-1)(n-2)}\\[3pt]
&=\frac{ck(ck-1)-(k-1)((3c-2)k+1)}{ck(ck-1)}\\[3pt]
&=\frac{(c-2)(c-1)k^2+(2c-3)k+1}{ck(ck-1)}\\[3pt]
&\geq \frac{k}{ck(ck-1)}\\[3pt]
&\geq \frac{1}{9k}.
\end{align*}
It follows that $x(2-x)=1-(1-x)^2\leq 1-\frac{1}{81k^2}$.
Thus to show \eqref{ineq-2.7} it suffices to show that
\begin{align}\label{ineq-2.8}
f(k):=162k^2\left(\frac{3}{5}\right)^{k-1}< 1.
\end{align}
It is easy to check that $f(24)\approx 0.737<1$. Moreover, for $k\geq 24$
\[
\frac{f(k+1)}{f(k)} = \frac{3}{5}\frac{(k+1)^2}{k^2}<\frac{3}{5}\cdot\frac{25^2}{24^2}<1.
\]
Thus \eqref{ineq-2.8} holds for $k\geq 24$.
For $k=8,9,\ldots,23$ and $2k+1\leq n\leq 3k$, one can check by direct computation that \eqref{ineq-key} holds.
\end{proof}
By almost the same proofs, one can also prove the following two inequalities. For self-containedness, we add their proofs in the Appendix.
\begin{lem}\label{lem-2.7}
For $k\geq 8$ and $2k+1\leq n\leq 4k$,
\begin{align}\label{ineq-key1}
\binom{n-2}{k-2}+\binom{n-4}{k-2} +\frac{h(n,k)^2}{\binom{n-2}{k-2}+\binom{n-4}{k-2}}> 2\binom{n-1}{k-1}.
\end{align}
\end{lem}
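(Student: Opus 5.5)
We follow the proof of \eqref{ineq-key} step by step. First we compute the ratio of the left-hand terms to $\binom{n-1}{k-1}$. We have $\binom{n-2}{k-2}=\frac{k-1}{n-1}\binom{n-1}{k-1}$ and $\binom{n-4}{k-2}=\frac{(k-1)(n-k)(n-k-1)}{(n-1)(n-2)(n-3)}\binom{n-1}{k-1}$. So with
\[
y:=\frac{k-1}{n-1}\left(1+\frac{(n-k)(n-k-1)}{(n-2)(n-3)}\right)
\]
we get $\binom{n-2}{k-2}+\binom{n-4}{k-2}=y\binom{n-1}{k-1}$. Dividing by $\binom{n-1}{k-1}$, \eqref{ineq-key1} becomes $y+\frac{h(n,k)^2/\binom{n-1}{k-1}^2}{y}>2$. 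Since $n\leq 4k$, \eqref{ineq-key0} with $t=4$ gives $h(n,k)^2/\binom{n-1}{k-1}^2>1-2(5/7)^{k-1}$. It therefore suffices to show
\[
1-2\left(\tfrac57\right)^{k-1}\geq y(2-y)=1-(1-y)^2,
\]
that is, $(1-y)^2\geq 2(5/7)^{k-1}$.

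The next step is a lower bound on $1-y$ that is linear in $1/k$. Note that $y<\frac{2(k-1)}{n-1}<1$ for $n\geq 2k+1$. Write $n=ck+1$ with $2\leq c\leq 4$ and expand. Then $1-y=\frac{N}{ck(ck-1)(ck-2)}$, where
\[
N=ck(ck-1)(ck-2)-(k-1)\bigl[(ck-1)(ck-2)+((c-1)k+1)(c-1)k\bigr].
\]
The $k^3$ coefficient is $c^3-c^2-(c-1)^2=(c-1)(c^2-c+1)-c^2+\ldots$, which we simplify to $(c-2)(c^2-c+\ldots)$-type factors. It is nonnegative on $[2,4]$ and vanishes at $c=2$. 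At $c=2$, i.e.\ $n=2k+1$, a direct check gives $N$ of order $k^2$: there $y=\frac{k-1}{2k}\bigl(1+\frac{(k+1)k}{(2k-1)(2k-2)}\bigr)\approx\frac58$, so $1-y$ is actually bounded below by a constant. I would therefore expect a bound such as $1-y\geq \frac{1}{Ck}$ with $C$ around $16$, or even a constant bound. The target is a clean estimate $1-y\geq \frac{1}{16k}$, obtained by showing $N\geq c^2k^2$ for all $2\leq c\leq 4$ and $k\geq 8$ and then using $ck(ck-1)(ck-2)\leq 64k^3$... giving $1-y\geq\frac{c^2k^2}{c^3k^3}=\frac1{ck}\geq\frac1{4k}$.

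With such a bound, it remains to show $g(k):=2\cdot 16k^2(5/7)^{k-1}<1$, or the analogous inequality with whatever constant is obtained. Since $g(k+1)/g(k)=\frac57\frac{(k+1)^2}{k^2}<1$ for $k\geq 6$, it suffices to find a $k_0$ with $g(k_0)<1$ and then verify the finitely many cases $8\leq k<k_0$, $2k+1\leq n\leq 4k$ by direct computation of \eqref{ineq-key1}, as was done for \eqref{ineq-key}.

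The main obstacle is the polynomial bookkeeping for $N$, which must yield a clean lower bound uniformly in $c\in[2,4]$. If this becomes messy, I would try splitting into $n\leq 3k$ and $3k<n\leq 4k$. In the first range, $1-y\geq 1-\frac{2(k-1)}{n-1}\cdot\frac{1+1/4}{2}$ is crude but constant. In the second range, $y\leq \frac{2(k-1)}{n-1}<\frac23$, so $1-y>\frac13$ and only $2(5/7)^{k-1}<\frac19$ is needed, which holds for $k\geq 9$. The case $k=8$ would be checked numerically. A constant lower bound on $1-y$ also reduces the finite check to very few values of $k$.
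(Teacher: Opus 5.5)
Your reduction is the same as the paper's. Your $y$ is the paper's $x$, and you use \eqref{ineq-key0} with $t=4$ to reduce to $(1-y)^2>2(5/7)^{k-1}$. The gap is that the one step with real content, a usable lower bound on $1-y$, is never established, and several of your statements about it are wrong.

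\begin{itemize}
\item The $k^3$-coefficient of $N$ is $c^3-c^2-(c-1)^2=(c-1)(c^2-c+1)$. This equals $3$ at $c=2$ and is at least $3$ on $[2,4]$; it does not vanish at $c=2$. You seem to have carried over the factor $(c-2)(c-1)$ from the proof of \eqref{ineq-key}, where it multiplies $k^2$.
\item At $n=2k+1$ one has $N=(k+1)(k-1)(3k-2)$, which is of order $k^3$, and $1-y=\frac{(k+1)(3k-2)}{4k(2k-1)}$. This agrees with your $y\approx 5/8$ but contradicts your claim that $N$ is of order $k^2$ there.
\item The bound $N\ge c^2k^2$, on which your main line rests, is only stated as a target and is not proved.
\item Your fallback for $n\le 3k$ implicitly uses $\frac{(n-k)(n-k-1)}{(n-2)(n-3)}\le\frac14$. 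This ratio already exceeds $\frac14$ at $n=2k+1$ and increases with $n$. For instance, at $k=8$, $n=17$ your bound would give $1-y\ge\frac{29}{64}$, whereas in fact $1-y=\frac{33}{80}<\frac{29}{64}$.
\item For $3k<n\le 4k$, the requirement $2(5/7)^{k-1}<\frac19$ holds only for $k\ge 10$, since $(5/7)^8\approx 0.068>\frac1{18}$.
\end{itemize}

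The missing idea is simply to factor the numerator, as the paper does:
\[
N=\bigl((c-1)k+1\bigr)\bigl((c^2-c+1)k^2-(2c+1)k+2\bigr).
\]
For $k\ge 8$ and $2\le c\le 4$ this gives $1-y\ge\frac{(c-1)(8c^2-10c+7)}{8c^3}>\frac14$. Hence $y(2-y)<\frac{15}{16}$, and only a few small values of $k$ remain for direct computation.

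Alternatively, your own remark $y<\frac{2(k-1)}{n-1}$ already gives $1-y>1-\frac{2(k-1)}{2k}=\frac1k$ for all $n\ge 2k+1$, with no bookkeeping at all. Then $2k^2(5/7)^{k-1}<1$ for $k\ge 22$, by the same ratio argument you give. The proof then closes with a larger but finite computer check of \eqref{ineq-key1} for $8\le k\le 21$ and $2k+1\le n\le 4k$.
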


\begin{lem}\label{lem-2.8}
For $k\geq 8$ and $3k\leq n\leq 4k$,
\begin{align}\label{ineq-key2}
\binom{n}{k-2} +\frac{h(n,k)^2}{\binom{n}{k-2}}> 2\binom{n-1}{k-1}.
\end{align}
\end{lem}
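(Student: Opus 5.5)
The statement is the last lemma, Lemma \ref{lem-2.8}: for $k\geq 8$ and $3k\leq n\leq 4k$ we have $\binom{n}{k-2}+h(n,k)^2/\binom{n}{k-2}>2\binom{n-1}{k-1}$. I would follow the template of the proof of \eqref{ineq-key}. Divide by $\binom{n-1}{k-1}$ and set
\[
y=\frac{\binom{n}{k-2}}{\binom{n-1}{k-1}}=\frac{n(k-1)}{(n-k+1)(n-k+2)}.
\]
The claim then becomes $y+\bigl(h(n,k)^2/\binom{n-1}{k-1}^2\bigr)/y>2$. Since $n\leq 4k$, the proposition giving \eqref{ineq-key0} with $t=4$ yields $h(n,k)^2/\binom{n-1}{k-1}^2>1-2(5/7)^{k-1}$. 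It therefore suffices to prove
\[
1-2\left(\tfrac{5}{7}\right)^{k-1}\geq y(2-y)=1-(1-y)^2,
\]
that is, $(1-y)^2\geq 2(5/7)^{k-1}$.

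The key step is a lower bound on $1-y$ that is independent of $k$, which is what makes this case easier than \eqref{ineq-key}. Write $n=ck$ with $3\leq c\leq 4$. Then $y\approx \frac{c}{(c-1)^2}$, which is at most $3/4$ at $c=3$ and decreasing in $c$. More precisely, I would compute
\[
1-y=\frac{(n-k+1)(n-k+2)-n(k-1)}{(n-k+1)(n-k+2)}.
\]
The numerator is $n^2-3nk+k^2+4n-3k+2$, which for $n\geq 3k$ is at least $k^2+9k+2>k^2$. The denominator is at most $(3k+2)^2$ for $n\leq 4k$. This crude estimate gives $1-y\geq k^2/(3k+2)^2\geq 1/16$ for $k\geq 8$, with ample room. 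Hence $(1-y)^2\geq 1/256$.

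It remains to check $2(5/7)^{k-1}<1/256$, i.e.\ $(5/7)^{k-1}<1/512$. This holds once $k-1\geq 19$, since $(7/5)^{19}\approx 598$. The main obstacle is the small cases $8\leq k\leq 19$. There I would first sharpen the estimates: use the exact bound $1-y\geq \frac{k^2+9k+2}{(3k+2)^2}\approx 0.13$, and use the better ratio $\bigl(\frac{n-3k/2}{n-k/2}\bigr)^{k-1}$ from the proof of \eqref{ineq-key0} with the actual $n$ rather than $t=4$. If that still fails for the smallest $k$, I would fall back, as the paper does for \eqref{ineq-key}, on direct computation of \eqref{ineq-key2} over the finitely many pairs $8\leq k\leq 19$, $3k\leq n\leq 4k$.
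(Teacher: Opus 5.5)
Your proposal is correct and follows the paper's route. Both arguments normalize by $\binom{n-1}{k-1}$, invoke \eqref{ineq-key0} with $t=4$, reduce to a $k$-independent lower bound on $1-y$, and settle the small $k$ by direct computation.

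The only difference is the constant. The paper keeps $n=ck$ in both numerator and denominator and gets $1-y\geq 1-\frac{c}{(c-1)^2}\geq\frac14$, so it needs only $(5/7)^{k-1}<\frac{1}{32}$. That holds for $k\geq 12$, not $k\geq 11$ as the paper states, since $(7/5)^{10}<32$. Your decoupled estimate takes the numerator at $n=3k$ and the denominator at $n=4k$, which gives only $1-y\geq\frac{1}{16}$ and pushes the finite check up to $k\leq 19$.
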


\section{Proof of Theorem \ref{thm-main2}}

Let us first prove Theorem \ref{thm-main2} for  $3k\leq n\leq 4k$.

\begin{lem}\label{lem-main}
Suppose that $\hf,\hg\subset \binom{[n]}{k}$ are CI and both non-trivial, $4k\geq n\geq 3k$, $k\geq 8$. Then
\begin{align*}
|\hf||\hg|\leq h(n,k)^2.
\end{align*}
\end{lem}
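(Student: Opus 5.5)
We take $(\hf,\hg)$ extremal as in Section 2: $\hf,\hg$ are CI and non-trivial, $|\hf||\hg|$ is maximal, and among such pairs $w(\hf,\hg)$ is minimal. Then either both families are shifted or $(\hf,\hg)$ is a shift-resistant pair. Proposition \ref{prop-2.1} yields three alternatives. If both are shifted, either $|\hf||\hg|\leq h(n,k)^2$ directly, or $\min\{|\hf|,|\hg|\}\leq \binom{n}{k-2}$. If the pair is shift-resistant, $\min\{|\hf|,|\hg|\}\leq \binom{n-2}{k-2}+\binom{n-4}{k-2}$. So it remains to treat the case where $m:=\min\{|\hf|,|\hg|\}$ is at most $A:=\binom{n}{k-2}$ (first situation) or at most $B:=\binom{n-2}{k-2}+\binom{n-4}{k-2}$ (second situation).

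In these cases we use a sum bound together with a one-variable convexity argument. By Proposition \ref{prop-2.3} we may assume $m>\binom{n-3}{k-3}+\binom{n-4}{k-3}$. Proposition \ref{prop-2.4} then gives $|\hf|+|\hg|\leq 2\binom{n-1}{k-1}$. Writing $M=\max\{|\hf|,|\hg|\}\leq 2\binom{n-1}{k-1}-m$, we get
\[
|\hf||\hg|\leq m\left(2\binom{n-1}{k-1}-m\right)=:p(m).
\]
The function $p$ is increasing for $m\leq \binom{n-1}{k-1}$. Both $A$ and $B$ are below $\binom{n-1}{k-1}$ when $n\geq 3k$: indeed $\binom{n}{k-2}/\binom{n-1}{k-1}=\frac{n(k-1)}{(n-k+2)(n-k+1)}<1$ for $n\geq 3k$, and $B<\binom{n-1}{k-1}$ trivially. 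Hence $|\hf||\hg|\leq p(A)$, respectively $p(B)$.

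To conclude we need $p(A)\leq h(n,k)^2$ and $p(B)\leq h(n,k)^2$. Dividing by $A$, the first inequality is exactly $A+h(n,k)^2/A\geq 2\binom{n-1}{k-1}$, which is Lemma \ref{lem-2.8} for $3k\leq n\leq 4k$. Dividing by $B$, the second is Lemma \ref{lem-2.7}, valid for $2k+1\leq n\leq 4k$. In both cases the strict inequality gives $|\hf||\hg|<h(n,k)^2$.

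The main point to verify carefully is the monotonicity step: one must check that $A$ and $B$ indeed lie below $\binom{n-1}{k-1}$ in the range $3k\leq n\leq 4k$, so that bounding $m$ from above bounds $p(m)$. Beyond this, all the numerical difficulty is contained in Lemmas \ref{lem-2.7} and \ref{lem-2.8}. This also explains why the restriction $n\geq 3k$ is needed here: for smaller $n$, $\binom{n}{k-2}$ can exceed $\binom{n-1}{k-1}$, and a different argument (for instance, via Corollary \ref{cor-key} and \eqref{ineq-key}) is required.
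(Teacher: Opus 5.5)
Your proposal is correct and follows the paper's proof. It uses the same reductions via Propositions \ref{prop-2.1}, \ref{prop-2.3} and \ref{prop-2.4}, and the same closing inequalities, Lemmas \ref{lem-2.7} and \ref{lem-2.8}. The only difference is cosmetic: you use that $m\bigl(2\binom{n-1}{k-1}-m\bigr)$ is increasing for $m\le\binom{n-1}{k-1}$, while the paper argues by contradiction using that $x+h(n,k)^2/x$ is decreasing for $x\le h(n,k)$, so it must check $\binom{n}{k-2}<h(n,k)$ rather than your weaker requirement $\binom{n}{k-2}<\binom{n-1}{k-1}$.
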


\begin{proof}
Suppose that $|\hf||\hg|\geq h(n,k)^2$.
 By Propositions \ref{prop-2.3} and  \ref{prop-2.4} we have
\begin{align}\label{ineq-3.1}
|\hf|+|\hg| \leq 2\binom{n-1}{k-1}.
\end{align}

By symmetry assume $|\hf|\leq |\hg|$. Then by Proposition \ref{prop-2.1} we may assume either $|\hf|\leq \binom{n}{k-2}$ or $|\hf|\leq \binom{n-2}{k-2}+\binom{n-4}{k-2}$. Clearly $\binom{n-2}{k-2}+\binom{n-4}{k-2}<h(n,k)$. For $n\geq 3k$,
\begin{align*}
\frac{\binom{n-2}{k-2}+2\binom{n-3}{k-2}}{\binom{n}{k-2}}
&=\frac{(n-k+2)(n-k+1)}{n(n-1)}+\frac{2(n-k+2)(n-k+1)(n-k)}{n(n-1)(n-2)}\\[3pt]
&\geq \frac{(2k+2)(2k+1)}{3k(3k-1)}+\frac{2(2k+2)(2k+1)2k}{3k(3k-1)(3k-2)}\\[3pt]
&> \frac{4}{9}+\frac{16}{27}>1.
\end{align*}
Thus $\binom{n}{k-2}<\binom{n-2}{k-2}+2\binom{n-3}{k-2}\leq h(n,k)$.

If $|\hf|\leq \binom{n}{k-2}$, then for $3k\leq n\leq 4k$,
\[
|\hf|+|\hg| \geq |\hf|+ \frac{h(n,k)^2}{|\hf|}\geq \binom{n}{k-2} +\frac{h(n,k)^2}{\binom{n}{k-2}} \overset{\eqref{ineq-key2}}{>} 2\binom{n-1}{k-1},
\]
contradicting \eqref{ineq-3.1}.

If $|\hf|\leq \binom{n-2}{k-2}+\binom{n-4}{k-2}$, then for $2k+1\leq n\leq 4k$,
\[
|\hf|+|\hg| \geq |\hf|+ \frac{h(n,k)^2}{|\hf|}\geq \binom{n-2}{k-2}+\binom{n-4}{k-2} +\frac{h(n,k)^2}{\binom{n-2}{k-2}+\binom{n-4}{k-2}} \overset{\eqref{ineq-key1}}{>} 2\binom{n-1}{k-1},
\]
contradicting \eqref{ineq-3.1} again. Thus the lemma follows.
\end{proof}

\begin{proof}[Proof of Theorem \ref{thm-main2}]
Let $\hf,\hg\subset \binom{[n]}{k}$ be  CI
and both non-trivial with $|\hf||\hg|$ maximal. Suppose for contradiction that $|\hf||\hg|\geq h(n,k)^2$. By Lemma \ref{lem-main}, we may further assume $2k+1\leq n\leq 3k$.

By applying Theorem \ref{thm-fk} with $u=3$, we obtain that if
\begin{align*}
|\hf|,|\hg|\geq \binom{n-1}{k-1}-\binom{n-4}{k-1}+\binom{n-4}{k-3}&=\binom{n-2}{k-2}+\binom{n-3}{k-2}+\binom{n-4}{k-2}+\binom{n-4}{k-3}\\[3pt]
&=\binom{n-2}{k-2}+2\binom{n-3}{k-2},
\end{align*}
then $\gamma(\hf),\gamma(\hg) <\binom{n-4}{k-3}$. Moreover, both families have the same (unique) element of the largest degree, say 1. It follows that $|\hf(1)|,|\hg(1)|\geq \binom{n-2}{k-2}+2\binom{n-3}{k-2}- \binom{n-4}{k-3}>\binom{n-2}{k-2}$. Then by Corollary \ref{cor-key} we are done. Thus by symmetry we may assume that
\[
|\hf|< \binom{n-2}{k-2}+2\binom{n-3}{k-2}\leq h(n,k)\leq |\hg|.
\] Then
\[
|\hf|+|\hg| \geq |\hf|+ \frac{h(n,k)^2}{|\hf|}\geq \binom{n-2}{k-2}+2\binom{n-3}{k-2} +\frac{h(n,k)^2}{\binom{n-2}{k-2}+2\binom{n-3}{k-2}} \overset{\eqref{ineq-key}}{>} 2\binom{n-1}{k-1}.
\]
However, by Propositions \ref{prop-2.3} and  \ref{prop-2.4} we have
\[
|\hf|+|\hg| \leq 2\binom{n-1}{k-1},
\]
a contradiction.
\end{proof}

\section{Concluding remarks}

 In the present paper, except for the cases $4\leq k\leq 7$, we concluded the determination of $\max |\hf||\hg|$ for
 non-trivial CI $k$-graphs. However this result begs the question, what happens if $\hf$ and $\hg$ have different uniformities.
 Let $n>2k>2\ell\geq 4$ be integers and fix  $F_0\in \binom{[2,n]}{k}$, $G_0\in \binom{[2,n]}{\ell}$ with $F_0\cap G_0\neq \emptyset$.
  Define
 \begin{align*}
 \hf_0 &=\{F_0\} \cup \left\{ F \in \binom{[n]}{k}\colon 1 \in F,\ F \cap G_0 \neq \emptyset \right\},\\[3pt]
 \hg_0&=\{G_0\} \cup \left\{  G \in \binom{[n]}{\ell}\colon  1 \in G,\ G \cap F_0 \neq \emptyset \right\}.
 \end{align*}

 Let us state a conjecture:

 \begin{conj}
 Let $n\geq 2k>2\ell\geq 4$. Suppose that $\hf\subset \binom{[n]}{k}$, $\hg\subset \binom{[n]}{\ell}$ are non-trivial CI families. Then
 \[
 |\hf||\hg| \leq  |\hf_0||\hg_0|.
 \]
 \end{conj}
   Let us mention that the corresponding product version of the Erd\H{o}s-Ko-Rado Theorem was proved by Matsumoto and Tokushige (\cite{MT}) .

\vspace{8pt}
{\noindent \bf Acknowledgement.} The second author was supported by
National Natural Science Foundation of China Grant no. 12471316 and the Fundamental Research Funds for
 the Central Universities.

\begin{appendix}
\section{Proofs of Lemma \ref{lem-2.7} and Lemma \ref{lem-2.8}.}
\begin{proof}[Proof of Lemma \ref{lem-2.7}]
Note that
\[
\binom{n-2}{k-2}+\binom{n-4}{k-2}=\left(\frac{k-1}{n-1}
+\frac{(k-1)(n-k)(n-k-1)}{(n-1)(n-2)(n-3)}\right)\binom{n-1}{k-1}.
\]
Let $x=\frac{k-1}{n-1}
+\frac{(k-1)(n-k)(n-k-1)}{(n-1)(n-2)(n-3)}$.
Dividing both sides by  $\binom{n-1}{k-1}$, we see that \eqref{ineq-key1} is equivalent to
\[
x+  \frac{h(n,k)^2/\binom{n-1}{k-1}^{2}}{x}> 2.
\]
Since $n\leq 4k$, by \eqref{ineq-key0} we have
\[
\frac{h(n,k)^2}{\binom{n-1}{k-1}^2}>1-2\left(\frac{5}{7}\right)^{k-1},
\]
it suffices to show that
\begin{align}\label{ineq-A1}
1-2\left(\frac{5}{7}\right)^{k-1} > x(2-x).
\end{align}
Let $n=ck+1$ with $2\leq c \leq 4$. Then for $k\geq 8$ we have
\begin{align*}
1-x&=1-\frac{k-1}{n-1}
-\frac{(k-1)(n-k)(n-k-1)}{(n-1)(n-2)(n-3)}\\[3pt]
&=\frac{((c-1)k+1)((c^2-c+1)k^2-(2c+1)k+2)}{ck(ck-1)(ck-2)}\\[3pt]
&\geq \frac{(c-1)((c^2-c+1)k-2c-1))}{c^3k}\\[3pt]
&\geq \frac{(c-1)(c^2-c+1-\frac{2c+1}{8})}{c^3}\\[3pt]
&\geq \frac{(c-1)(8c^2-10c+7)}{8c^3}>\frac{1}{4}.
\end{align*}
It follows that $1-(1-x)^2=x(2-x)\leq 1-\frac{1}{16}$.
Thus, to show \eqref{ineq-A1} it suffices to show that $\left(\frac{5}{7}\right)^{k-1}< \frac{1}{32}$,
which is true for $k\geq 11$.
For $k=8,9,10$ and $2k+1\leq n\leq 4k$, one can check by direct computation that \eqref{ineq-key1} holds.
\end{proof}

\begin{proof}[Proof of Lemma \ref{lem-2.8}]
Note that
\[
\binom{n}{k-2}=\frac{(k-1)n}{(n-k+2)(n-k+1)}\binom{n-1}{k-1}.
\]
Let $x=\frac{(k-1)n}{(n-k+2)(n-k+1)}$.
Dividing both sides by  $\binom{n-1}{k-1}$, we see that \eqref{ineq-key2} is equivalent to
\[
x+  \frac{h(n,k)^2/\binom{n-1}{k-1}^{2}}{x}> 2.
\]
By $n\leq 4k$ and \eqref{ineq-key0},
it suffices to show that
\begin{align}\label{ineq-A2}
1-2\left(\frac{5}{7}\right)^{k-1} > x(2-x).
\end{align}
Let $n=ck$ with $3\leq c \leq 4$. Then
\begin{align*}
1-x=1-\frac{(k-1)n}{(n-k+2)(n-k+1)}
&=\frac{(c^2-3c+1)k^2+(4c-3)k+2}{((c-1)k+2)((c-1)k+1)}\\[3pt]
&= \frac{(c^2-3c+1)k^2+(4c-3)k+2}{(c-1)^2k^2+3(c-1)k+2}\\[3pt]
&\geq \frac{c^2-3c+1}{(c-1)^2}\\[3pt]
&= 1-\frac{c}{(c-1)^2}\geq \frac{1}{4}.
\end{align*}
It follows again that $x(2-x)\leq 1-\frac{1}{16}$.
Thus, to show \eqref{ineq-A2} it suffices to show that
$\left(\frac{5}{7}\right)^{k-1}< \frac{1}{32}$, which is true for $k\geq 11$.
For $k=8,9,10$ and $3k\leq n\leq 4k$, one can check by direct computation that \eqref{ineq-key2} holds.
\end{proof}

\end{appendix}

\end{document}